\theoremstyle:=definition,remark,plain\do{%
        \expandafter\g@addto@macro\csname th@\theoremstyle\endcsname{%
            \addtolength\thm@preskip\parskip
            }%
        }
\newtheorem{teo}{Theorem}[section]
\newtheorem{prop}[teo]{Proposition}
\newtheorem{lemma}[teo]{Lemma}
\newtheorem{re}[teo]{Remark}
\newtheorem{definicao}[teo]{Definition}
\newtheorem{cor}[teo]{Corollary}
\numberwithin{equation}{section}
\newcommand{\Real}{\mathbb R}
\author{\textsc{Luccas Campos and Mykael Cardoso}}
\title{Blow up and scattering criteria above the threshold for the focusing inhomogeneous nonlinear Schrödinger equation}
\author{Luccas Campos and Mykael Cardoso}
\date{}
\begin{document}

\maketitle
\begin{abstract}\noindent
We consider the inhomogeneous nonlinear Schrödinger equation (INLS) in $\mathbb{R}^N$
$$i \partial_t u + \Delta u + |x|^{-b} |u|^{p-1}u = 0,$$
with finite-variance initial data $u_0 \in H^1(\Real^N)$. We extend the dichotomy between scattering and blow-up for solutions above the mass-energy threshold (and with arbitrarily large energy). We also show other two blow-up criteria, wich are valid in any mass-supercritical setting, given there is local well-posedness.
\end{abstract}
\section{Introduction}
We consider the initial value problem associated to the inhomogeneous nonlinear Schrödinger equation (INLS):
\begin{equation}\label{PVI}
\left\{
\begin{array}{l}
i\partial_t u +\triangle u +|x|^{-b}|u|^{p-1}u=0,\,\,\,\,t>0, \,\, x\in \mathbb{R}^N,\\
u(\cdot,0)=u_0\in H^{1}(\mathbb{R}^N).
\end{array}
\right.
\end{equation}

 This model arises naturally as a limiting problem in nonlinear optics for the propagation of laser beams.
The case $b = 0$ is the classical nonlinear Schrödinger equation (NLS), extensively studied in recent years (see Sulem-Sulem \cite{Sulem}, Bourgain \cite{Bo99}, Cazenave \cite{cazenave}, Linares-Ponce \cite{LiPo15}, Fibich \cite{Fi15} and the references therein).

The lower Sobolev index where one can expect well-posedness for this model is given by scaling. If $u(x, t)$ is a solution to \eqref{PVI}, so is $u_{\lambda}(x, t)=\lambda^{\frac{2-b}{p-1}}u(\lambda x,\lambda^2 t)$, with
initial data $u_{0,\lambda}(x),$ for all $\lambda > 0$. Computing the homogeneous Sobolev norm,
we get
$$\|u_{0,\lambda}\|_{\dot{H}^{s}}=\lambda^{s-\frac{N}{2}+\frac{2-b}{p-1}}\|u_0\|_{\dot{H}^{s}}.$$
Thus, the scale-invariant Sobolev norm is $\dot{H}^{s_c}(\Real^N)$, where  $$s_c=\frac{N}{2}-\frac{2-b}{p-1}$$ is called the \textit{critical Sobolev index}.
 
In this paper, we are interested in the case $s_c > 0$, known as \textit{mass-supercritical}. Rewriting this condition in terms of $p$, we obtain
\begin{equation}\label{sc}
p >1+ \frac{2(2-b)}{N}.
\end{equation}

The local well-posedness for the INLS equation was first studied by Genoud-Stuart in \cite{g_8} (see also Genoud \cite{g_6}) by the abstract theory of Cazenave \cite{cazenave}, without relying on Strichartz type inequalities. They analyzed the IVP \eqref{PVI} in the sense of distributions, that is, $i\partial_tu+\Delta u+|x|^{-b}|u|^{p-1}u=0 $ in $H^{-1}(\mathbb{R}^N)$ and showed, with $0 < b < 2$, it is well-posed
\begin{itemize}
        \item[-] locally if $1 < p < p^*_b$ ($s_c < 1$);
        \item[-] globally for any initial data in $H^1(\Real^{N})$ if $p <1+ \frac{2(2-b)}{N}$ ($s_c < 0$);
        \item[-] globally for sufficiently small initial data if $1+\frac{2(2-b)}{N}  \leq p < p_b^*$ ($0 \leq s_c < 1$),
\end{itemize}
where
\begin{equation}\label{p_cond}
p^*_b = \begin{cases} \infty, & N \leq 2,  \\
1+\frac{2(2-b)}{N-2}, & N \geq 3. \\
\end{cases}
\end{equation}
More recently, Guzmán \cite{Boa} established local well-posedness of the INLS in $H^s(\Real^{N})$ based on Strichartz estimates. In particular, setting 
$$
\tilde{2} = \begin{cases} \frac{N}{3}, & N \leq 3  \\
2, & N \geq 4, \\
\end{cases}
$$
he proved that, for $N \geq 2$, $1 < p < p_b^*$ and $0 < b < \tilde{2}$, the initial value problem \eqref{PVI} is locally well-posed in $H^1(\Real^N)$. Dinh \cite{Boa_Dinh} improved Guzmán's results in dimensions $N=2$ (for $0<b<1$ and $0 < p < p_b^*)$ and  $N = 3$ (for $0 < b < \frac{3}{2}$ and $0 < p <1+ \frac{6-4b}{2b-1}$). Note that the results of Guzmán \cite{Boa} and Dinh \cite{Boa_Dinh} do not treat the case $N = 1$, and the ranges of $b$ are more restricted than those in the results of Genoud-Stuart \cite{g_8}. However, Guzmán and Dinh give more detail information on the solutions, showing that there exists $T(\|u_0\|_{H^1})>0$ such that $u \in L^q\left([-T,T];L^r(\Real^N)\right)$ for any $L^2$-admissible pair $(q,r)$ satisfying
\begin{equation*}
\frac{2}{q}=\frac{N}{2}-\frac{N}{r},
\end{equation*}
where 
\begin{equation*}
\left\{\begin{array}{cl}
2\leq & r  \leq \frac{2N}{N-2}\hspace{0.5cm}\textnormal{if}\;\;\;  N\geq 3,\\
2 \leq  & r < +\infty\;  \hspace{0.5cm}\textnormal{if}\;\; \;N=2,\\
2 \leq & r \leq + \infty\;  \hspace{0.5cm}\textnormal{if}\;\;\;N=1.
\end{array}\right.
\end{equation*}

The solutions to \eqref{PVI} have the following conserved quantities
\begin{equation}\label{mass}
M\left[u(t) \right] = \int |u(t)|^2 dx = M[u_0],
\end{equation}
\begin{equation}\label{energy}
E\left[u(t) \right] = \frac{1}{2}\int |\nabla u(t)|^2 dx - \frac{1}{p+1} \int |x|^{-b}|u(t)|^{p+1} dx = E[u_0].
\end{equation}

The blow-up theory in the INLS equation is related to the concept of \textit{ground state}, which is the unique positive radial solution of the elliptic problem
\begin{equation}
\Delta Q - Q + |x|^{-b}|Q|^{p-1} Q = 0.
\label{ground_state}
\end{equation}

The existence of the ground state is proved by Genoud-Stuart \cite{g_5, g_8} for dimension $N \geq 2$, and by Genoud \cite{g_6} for $N = 1$. Uniqueness was proved in dimension $N \geq 3$ by Yanagida \cite{g_19} (see also Genoud \cite{g_5}), in dimension $N = 2$ by Genoud \cite{g_7} and in dimension $N = 1$ by Toland \cite{g_16}. The existence and uniqueness hold for $0 < b < \tilde{2}$ and $0 < \sigma < \sigma^*_b$.

The ground state satisfies the following Pohozaev's identities (see relations (1.9)-(1.10) in Farah\cite{Farah_well})\begin{equation}\label{pohozaev}
\|\nabla Q\|^{2}_{L^2}=\frac{N(p-1)+2b}{2(p+1)}\int|x|^{-b}|Q|^{p+1}\,dx,
\end{equation}
and
\begin{align}\label{EQ}
E[Q]=\frac{(p-1)s_c}{2(p+1)}\int|x|^{-b}|Q|^{p+1}\,dx.
\end{align}
Genoud \cite{Genoud1} and Farah \cite{Farah_well} proved the following sharp Gagliardo-Nirenberg inequality, valid for $0 \leq s_c < 1$ and $0<b<\min\{2,N\}$\begin{equation}\label{GN1}
\int_{\mathbb{R}^N}|x|^{-b}|f(x)|^{p+1}\,dx\leq C_{p,N}\|\nabla f\|_{L^2(\mathbb{R}^N)}^{\frac{N(p-1)+2b}{2}}\|f\|_{L^2(\mathbb{R}^N)}^{p+1-\frac{(N(p-1)+2b)}{2}},
\end{equation}
where $C_{p,N}>0$ is the sharp constant. More precisely,
\begin{align}\label{cnstGN}
C_{p,N}=\left(\frac{2(p+1)}{N(p-1)+2b}\right)^{\frac{N(p-1)+2b}{4}}\frac{\left(\displaystyle\int|x|^{-b}|Q|^{p+1}\right)^{1-\frac{N(p-1)+2b}{4}}}{\|Q\|^{p+1-\frac{N(p-1)+2b}{2}}_{L^2(\mathbb{R^N})}}.
\end{align}
This inequality can be seen as an extension to the case $b > 0$ of the classical Gagliardo-Nirenberg inequality. It is also an extension of the inequality obtained by Genoud \cite{Genoud1}, who showed its validity for $p =1+ \frac{2(2-b)}{N}$.

If $u$ is a solution to \eqref{PVI} and $u_0 \in \Sigma = \left\{ f \in H^1(\mathbb{R}^N) ; |x|f \in L^2(\mathbb{R}^N)\right\}$, we define its variance at time $t$ as
\begin{equation}
V(t)=\int|x|^2|u(x,t)|^2\,dx.
\end{equation}

The variance satisfies the \textit{virial identities} (see Farah \cite[Proposition 4.1]{Farah_well})
\begin{align}\label{dvar}
V_t(t)&=
4 \text{ Im}\int x\cdot \nabla u(x,t) \overline{u}(x,t)\,dx
\end{align}
and
\begin{align}\label{d2var}
V_{tt}(t)&=
4(N(p-1)+2b)E[u]-2(N(p-1)+2b-4)\|\nabla u\|^{2}_{L^2(\Real^N)}.
\end{align}

From this identity, we immediately see that, if $u_0 \in \Sigma$, $p >1+ \frac{2(2-b)}{N}$ and $E\left[u_0 \right] < 0$, then the graph of $t \mapsto \int |x|^2 |u|^2$ lies below an inverted parabola, which becomes negative in finite time. Therefore, the solution cannot exist globally and blows up in finite time. Recently, \citet{Blowup_Dinh} extended this result to the radial case, and to the case $N =\ 1$ without symmetry or decaying assumptions.
\subsection{Dichotomy above the mass-energy threshold}

An important scale-invariant quantity is $M[u_0]^{1-s_c}E[u_0]^{s_c}$, which we normalize (for $0 < s_c < 1$) as

\begin{equation}\label{massenergy}
\mathcal{ME}[u] = \mathcal{ME}[u_0]\ = \frac{ M[u_0]^\frac{1-s_c}{s_c}E[u_0]}{M[Q]^\frac{1-s_c}{s_c}E[Q]}
\end{equation}

and call it the \textit{mass-energy}. Other useful scale-invariant quantities are the \textit{mass-potential-energy}:
\begin{equation}\label{mp}
\mathcal{MP}[u(t)]\ = \frac{ M[u_0]^\frac{1-s_c}{s_c}\displaystyle\int|x|^{-b}|u(t)|^{p+1}}{M[Q]^\frac{1-s_c}{s_c}\displaystyle\int|x|^{-b}|Q|^{p+1}}
\end{equation}

and the \textit{mass-kinetical-energy}

\begin{equation}\label{mk}
\mathcal{MK}[u(t)]=       \frac{ M[u_0]^\frac{1-s_c}{s_c}\displaystyle\int |\nabla u(t)|^2}{M[Q]^\frac{1-s_c}{s_c}\displaystyle\int |\nabla Q|^2}.
\end{equation}

In previous works, \citet{FG_scat} and \citet{Blowup_Dinh} studied the global behavior of solutions to \eqref{PVI} below the \textit{mass-energy threshold}, i.e, in the case $\mathcal{ME}[u_0] < 1$. They proved a dichotomy between blow-up and scattering, depending on the quantity $\mathcal{MK}[u_0]$.

 We summarize   the global behavior of solutions to \eqref{PVI} with $\mathcal{ME}[u_0] < 1$ in the following theorem
\begin{teo}\label{dic_guz}
Let $u(x,t)$ be a solution of \eqref{PVI} and $0 < s_c < 1$. Assume $\mathcal{ME}[u_0] < 1$. Then
\begin{itemize}
        \item[(i)] If $\mathcal{MP}[u_0] > 1$, and either $V(0) < \infty,$ or $u_0$ is radial, or $N = 1$, then the solution blows up in finite time, in both time directions.
        \item[(ii)] If $\mathcal{MP}[u_0] < 1$, $N \geq 2$, and $u_0$ is radial then the solution is global and scatters, in both time directions.
\end{itemize}
\end{teo}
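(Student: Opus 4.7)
The plan is to handle the two parts separately, with a common first step that uses the sharp Gagliardo–Nirenberg inequality \eqref{GN1} to produce a time-independent coercivity/trapping statement. Specifically, I would introduce the function $g(y) = \tfrac12 y - \tfrac{C_{p,N}}{p+1}\,M[u_0]^{\alpha}\, y^{(N(p-1)+2b)/4}$, where $\alpha$ is chosen so that \eqref{GN1} yields $E[u(t)] \geq g(\|\nabla u(t)\|_{L^2}^2)$. Using the sharp constant \eqref{cnstGN} and the Pohozaev identities \eqref{pohozaev}–\eqref{EQ}, the unique critical point of $g$ turns out to be $y_\ast = \|\nabla Q\|_{L^2}^2\,M[Q]^{(1-s_c)/s_c}/M[u_0]^{(1-s_c)/s_c}$, with $g(y_\ast) = M[Q]^{(1-s_c)/s_c}E[Q]/M[u_0]^{(1-s_c)/s_c}$. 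Thus $\mathcal{ME}[u_0] < 1$ forces $\|\nabla u(t)\|_{L^2}^2 \ne y_\ast$ for all $t$, and a continuity argument in $t$ shows that the sign of $\mathcal{MK}[u(t)] - 1$ is conserved. A short algebraic check (using that $g$ is monotone on either side of $y_\ast$, together with energy conservation) shows that, under $\mathcal{ME}[u_0]<1$, the hypothesis $\mathcal{MP}[u_0] > 1$ is equivalent to $\mathcal{MK}[u_0] > 1$, and moreover one gets a uniform gap $\mathcal{MK}[u(t)] \geq 1 + \delta$ throughout the life-span.

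For part (i), I would plug this gap into the virial identity \eqref{d2var}. Since $s_c>0$ we have $N(p-1)+2b > 4$, so the coefficient of $\|\nabla u\|_{L^2}^2$ is negative. Using the Pohozaev identities to rewrite $E[Q]$ in terms of $\|\nabla Q\|_{L^2}^2$, the right-hand side of \eqref{d2var} becomes strictly negative and bounded away from zero, namely $V_{tt}(t) \leq -c < 0$ for some $c$ depending on $\delta$ and on $\mathcal{ME}[u_0]$. Under $V(0) < \infty$, the inequality $V_{tt} \leq -c$ forces $V(t) < 0$ in finite time, contradicting $V \geq 0$ and producing the claimed blow-up. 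In the radial case with $N \geq 2$, the same argument must be carried out on a truncated variance $V_R(t) = \int \varphi_R(|x|)|u|^2$ with a suitable cutoff $\varphi_R$; the extra terms are then controlled by radial Sobolev embeddings (as in Ogawa–Tsutsumi), at the price of taking $R$ large and absorbing the errors into the negative leading term — this localization is the most delicate point of (i). The case $N = 1$ follows the ad hoc treatment of Dinh \cite{Blowup_Dinh}.

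For part (ii), I would follow the Kenig–Merle concentration/compactness + rigidity scheme. First, the coercivity above gives a uniform bound on $\|\nabla u(t)\|_{L^2}$, hence global existence. Small-data Strichartz theory yields scattering for small radial initial data. Suppose, toward a contradiction, that the set of radial data satisfying the hypotheses but whose solutions do not scatter is non-empty; minimize the scale-invariant quantity $\mathcal{ME}[u_0]$ on this set. A radial profile decomposition adapted to the INLS, together with a nonlinear perturbation lemma, produces a critical element: a non-scattering solution with minimal $\mathcal{ME}$ whose trajectory $\{u(t) : t \in \Real\}$ is pre-compact in $H^1_{\mathrm{rad}}$. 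A localized virial/Morawetz estimate on this critical element, using the coercivity in the interior and the rapid decay of the tail guaranteed by compactness, then forces $u \equiv 0$ — the desired contradiction.

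The main obstacle is the inhomogeneity $|x|^{-b}$: it breaks translation invariance, which on one hand simplifies the concentration-compactness step (there is no translation parameter to track) but on the other hand forces careful handling of profiles escaping to spatial infinity, where the nonlinearity effectively becomes the standard NLS one. Controlling these escaping profiles and proving the corresponding perturbation lemma — together with the analogous control of the error terms in the localized virial for radial blow-up — is where the bulk of the technical work lies, and is the reason the proofs in \cite{FG_scat,Blowup_Dinh} are restricted to the radial (or finite-variance) setting.
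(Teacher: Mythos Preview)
Your outline is essentially correct and follows the standard Kenig--Merle/virial route, but you should be aware that the paper does not actually prove Theorem~\ref{dic_guz}: it is stated purely as a summary of prior work --- the blow-up part is due to Dinh \cite{Blowup_Dinh} and the scattering part to Farah--Guzm\'an \cite{FG_scat} --- both of which were originally formulated with $\mathcal{MK}[u_0]$ in place of $\mathcal{MP}[u_0]$. The paper's only contribution toward this statement is Proposition~\ref{MPequiv}, which shows that under $\mathcal{ME}[u_0] \leq 1$ one has $\mathcal{MK}[u_0] < 1 \Longleftrightarrow \mathcal{MP}[u_0] < 1$; this is exactly the ``short algebraic check'' you allude to in your first paragraph. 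So your sketch correctly reproduces the arguments of \cite{FG_scat,Blowup_Dinh}, whereas the paper itself simply invokes those references and supplies the $\mathcal{MP}/\mathcal{MK}$ equivalence.
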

\begin{re}
 The case $\mathcal{MP}[u_0] = 1$ cannot occur if $\mathcal{ME}[u_0] < 1$ (see  \citet[Lemma 4.2, item (ii)]{FG_scat}.
\end{re}
\begin{re}In \citet{FG_scat} and \citet{Blowup_Dinh}, this theorem was proven using $\mathcal{MK}[u_0]$ instead of $\mathcal{MP}[u_0]$. We show the equivalence, if $\mathcal{ME}[u_0] \leq 1$ in Proposition \ref{MPequiv}. Therefore, as in the case $\mathcal{ME}[u_0] > 1$ the equivalence does not hold, the quantity that governs the dichotomy between blow-up and scattering is, in any case, $\mathcal{MP}[u_0]$.
\end{re}
We are interested here in criteria that includes initial data \textit{above} the threshold $\mathcal{ME}[u_0] = 1$.
The first theorem we prove is a dichotomy

\begin{teo}\label{teo1}
Let $u$ be a solution of \eqref{PVI}, where $1+\frac{2(2-b)}{N} < p < p^*_b$. Assume $N \geq 2$, $V(0)<\infty$, $u_0\in H^1(\Real^{N})$ and
\begin{equation}\label{cond1}
\mathcal{ME}[u_{0}]\left(1-\displaystyle\frac{(V_t(0))^2}{32E[u_0]V(0)}\right)\leq 1.
\end{equation}

\begin{itemize}
\item[(i)] (Blow-up) If
\begin{equation}\label{condb1}
\mathcal{MP}[u_0] > 1
\end{equation}
and
\begin{equation}\label{cond2}
V_t(0)\leq0,
\end{equation}

then $u(t)$ blows-up in finite positive, $T_{+}<\infty.$

\item[(ii)] (Boundedness and scattering) If
\begin{equation}\label{conds1}
\mathcal{MP}[u_0] < 1
\end{equation}
and
\begin{equation}\label{conds2}
V_t(0)\geq0,
\end{equation}
then
\begin{equation}\label{limsup}
\limsup_{t\to T_+(u)}M[u_0]^{1-s_c}\left(\int|x|^{-b}|u(t)|^{p+1}\right)^{s_c}<M[Q]^{1-s_c}\left(\int|x|^{-b}|Q|^{p+1}\right)^{s_c}.
\end{equation}
In particular, $T_+=+\infty$. Moreover, if $b < \min\left\{\frac{N}{3},1\right\}$ and $u$ is radial, then it scatters forward in time in $H^1$.
\end{itemize}
\end{teo}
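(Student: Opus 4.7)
The strategy is to adapt the Duyckaerts--Roudenko approach (originally developed for the cubic focusing NLS in $\Real^3$) to the INLS setting. The analytic backbone combines the Cauchy--Schwarz inequality applied to \eqref{dvar}, $(V_t(t))^2 \leq 16\,V(t)\,\|\nabla u(t)\|_{L^2}^2$, with the second virial identity \eqref{d2var}. Substituting, one obtains
\begin{equation*}
V(t)\,V_{tt}(t) + \frac{N(p-1)+2b-4}{8}\,(V_t(t))^2 \leq 2(N(p-1)+2b)\,E[u_0]\,V(t),
\end{equation*}
and in the mass-supercritical regime \eqref{sc} the coefficient $N(p-1)+2b-4$ is strictly positive. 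The hypothesis \eqref{cond1} is equivalent to a \emph{shifted mass-energy bound} $M[u_0]^{(1-s_c)/s_c}\bigl(E[u_0]-(V_t(0))^2/(32V(0))\bigr)\leq M[Q]^{(1-s_c)/s_c}E[Q]$, which plays, at the initial time, the role of the classical below-threshold hypothesis $\mathcal{ME}[u_0]<1$ used in Theorem~\ref{dic_guz}.

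The core of the proof is a \emph{trapping lemma}: under \eqref{cond1} plus the sign hypothesis \eqref{cond2} (respectively \eqref{conds2}), the condition $\mathcal{MP}[u_0]>1$ (resp.\ $<1$) propagates to every $t$ in the maximal existence interval $[0,T_+)$. I would prove this by continuity and contradiction. A first crossing $\mathcal{MP}[u(t_0)]=1$ would, via the identity $\alpha\,\mathcal{MK}[u(t_0)]-4\,\mathcal{MP}[u(t_0)]=(\alpha-4)\,\mathcal{ME}[u_0]$ (with $\alpha:=N(p-1)+2b$, obtained from conservation of mass and energy) together with the sharp Gagliardo--Nirenberg inequality \eqref{GN1}--\eqref{cnstGN} and Pohozaev \eqref{pohozaev}--\eqref{EQ}, pin $\|\nabla u(t_0)\|_{L^2}$ to a specific value determined by $\mathcal{ME}[u_0]$. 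Integrating the displayed differential inequality on $[0,t_0]$ while exploiting the sign assumption on $V_t(0)$ then produces a contradiction with the shifted mass-energy bound.

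With the trapping in place, case (i) follows standard lines: $\mathcal{MP}[u(t)]>1$ combined with \eqref{GN1} and \eqref{EQ} yields a uniform strict gap $2(N(p-1)+2b-4)\|\nabla u(t)\|_{L^2}^2-4(N(p-1)+2b)E[u_0]\geq\delta>0$ depending only on $u_0$, whence \eqref{d2var} implies $V_{tt}(t)\leq-\delta$. Using \eqref{cond2} this forces $V(t)\leq V(0)-\tfrac{\delta}{2}t^2$, which becomes negative in finite time, so $T_+<\infty$. For case (ii), the propagation of $\mathcal{MP}[u(t)]<1$ together with \eqref{GN1} directly yields the uniform bound \eqref{limsup}, from which one obtains an $H^1$-bound on $u(t)$ and hence $T_+=+\infty$. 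Under the additional assumptions $b<\min\{N/3,1\}$ and radial $u$, scattering is then obtained by invoking Theorem~\ref{dic_guz}(ii) at a suitable time $t^*\geq 0$ at which the solution falls into the classical below-threshold regime, and concatenating with the local theory on $[0,t^*]$.

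The main obstacle is the trapping lemma itself. The delicate point is that the shifted mass-energy bound is available only at $t=0$, whereas one needs it at a hypothetical first crossing time $t_0>0$. Transferring the bound requires a careful integration of the above differential inequality together with the sign condition \eqref{cond2} or \eqref{conds2} on $V_t(0)$, essentially a Gronwall-type argument for the quantity $(V_t(t))^2/V(t)$. Once this is accomplished, every remaining step draws only on tools already recorded in the excerpt: the virial identities, the sharp Gagliardo--Nirenberg inequality together with its sharp constant, the Pohozaev identities for the ground state, and the radial below-threshold scattering theorem.
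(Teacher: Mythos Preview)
Your outline has two genuine gaps, both stemming from the fact that you are working in the regime $\mathcal{ME}[u_0]\ge 1$ but reusing below-threshold reasoning.

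\textbf{Case (i).} The claim that $\mathcal{MP}[u(t)]>1$ together with \eqref{GN1} and \eqref{EQ} yields $V_{tt}(t)\le-\delta$ is false when $\mathcal{ME}[u_0]\ge1$. From \eqref{d2var}, $V_{tt}(t)<0$ is equivalent to $\|\nabla u(t)\|_{L^2}^2>\frac{2\alpha}{\alpha-4}E[u_0]$, where $\alpha=N(p-1)+2b$. The sharp Gagliardo--Nirenberg inequality gives $\mathcal{MP}[u(t)]>1\Rightarrow\mathcal{MK}[u(t)]>1$, hence $\|\nabla u(t)\|_{L^2}^2>\frac{2\alpha}{\alpha-4}E[Q]\,M[Q]^{(1-s_c)/s_c}M[u_0]^{-(1-s_c)/s_c}$ via \eqref{pohozaev}--\eqref{EQ}. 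This exceeds $\frac{2\alpha}{\alpha-4}E[u_0]$ precisely when $\mathcal{ME}[u_0]<1$, which you have explicitly excluded. So even with perfect trapping, your blow-up mechanism does not fire. The paper fixes this by replacing the crude Cauchy--Schwarz $(V_t)^2\le16V\|\nabla u\|^2$ with Banica's refinement (Lemma~\ref{cotaVt}), which subtracts the potential term $(C_QM^\kappa)^{-1}\bigl(\int|x|^{-b}|u|^{p+1}\bigr)^{4/\alpha}$ on the right. This leads, for $z=\sqrt{V}$, to $(z_t)^2\le4\varphi(V_{tt})$ for an explicit $\varphi$ with a unique minimum at $\alpha_m$ characterized by \eqref{cond1}. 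The argument then shows $z_{tt}<0$ (concavity of $\sqrt{V}$, not of $V$) on the whole lifespan, which combined with $z_t(0)\le0$ forces finite-time blow-up.

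\textbf{Case (ii).} Your proposed reduction to Theorem~\ref{dic_guz}(ii) at a time $t^*$ where the solution ``falls into the classical below-threshold regime'' cannot work: mass and energy are conserved, so $\mathcal{ME}[u(t)]=\mathcal{ME}[u_0]\ge1$ for every $t$, and the solution never enters that regime. The paper instead runs a new concentration-compactness/rigidity argument (Section~\ref{s_scat}) in which the critical threshold is parametrized by the $\mathcal{MP}$-type bound \eqref{limsup} rather than by $\mathcal{ME}<1$; Lemma~\ref{energylemma} supplies the coercivity needed to launch that argument. The boundedness part of (ii) is obtained by the same $z=\sqrt{V}$ analysis as in (i), now showing $V_{tt}(t)>\alpha_m+\sqrt{\varepsilon_0}/D$ for all large $t$, which via \eqref{enervirip} is exactly \eqref{limsup}.
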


\begin{re} If $\mathcal{ME}[u_0] < 1$, the conclusion of Theorem \ref{teo1} follows from Theorem \ref{dic_guz}. Theorem is new only in the case $\mathcal{ME} [u_0]\geq 1$.
\end{re}

\begin{re}
The proof of Theorem \ref{teo1} shows that there are two disjoint subsets (defined by \eqref{cond1}, \eqref{condb1} and \eqref{cond2}; and by \eqref{cond1}, \eqref{conds1} and \eqref{conds2}) that are stable under the INLS flow and contain solutions with arbitrary mass and energy (see, for example, Remark \ref{re_quad} below).
\end{re}

\begin{re}
We prove in Section \ref{s_scat} that any solution of \eqref{PVI} that satisfies \eqref{limsup} scatters for positive time. Replacing $\mathcal{MP}[u_0]$ by $\mathcal{MK}[u_0]$, this result is already known (see \citet{FG_scat}). Due to the one-sided implication \eqref{ida}, our assumption is weaker. Therefore, Theorem \ref{teo1} improves known results.
\end{re}

\begin{re}
The scattering statement of Theorem \ref{teo1} is optimal in the following sense: If $u_0 \in H^1(\Real^N)$ has finite variance and scatters forward in time, then there exists $t_0 \geq 0$ such that \eqref{cond1}, \eqref{conds1} and \eqref{conds2} are satisfied by $u(t)$ for all $t \geq t_0$. In fact, if $u(t)$ scatters forward in time, then $\displaystyle\int |x|^{-b}|u(t)|^{p+1}\to 0$. This implies $E[u_0] > 0$ and, by \eqref{d2var},
$$
V_t(t) \approx 16 E[u_0]t\quad\quad \text{and } V(t) \approx 8 E[u_0] t^2
$$

which implies $$\mathcal{ME}[u_{0}]\left(1-\displaystyle\frac{(V_t(t))^2}{32E[u_0]V(t)}\right)\to 0, \text{ as }t \to +\infty.$$
\end{re}

As a consequence of Theorem \ref{teo1}, we obtain
\begin{cor}\label{cor_gamma} Let $\gamma \in \Real\backslash\{0\}$, $v_0 \in H^1(\Real^N)$ with finite variance be such that $\mathcal{ME}[v_0] < 1$, and $u^\gamma$ be the solution of \eqref{PVI} with initial data
$$
u_0^\gamma = e^{i\gamma |x|^2}v_0.
$$

\begin{itemize}
\item[(i)] If $\mathcal{MP}[v_0] > 1$, then for any $\gamma <  0$, $u^\gamma$ blows up in finite positive time;
\item[(ii)] If $\mathcal{MP}[v_0] < 1$, then for any $\gamma > 0$, $u^\gamma$ satisfies \eqref{limsup}. Moreover, if $b < \min\left\{\frac{N}{3},1\right\}$ and $v_0$ is radial, then $u^\gamma$ scatters forward in time in $H^1(\Real^N)$.
\end{itemize}
\end{cor}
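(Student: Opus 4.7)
The plan is to reduce the corollary to Theorem~\ref{teo1} by verifying its hypotheses for $u_0^\gamma$, falling back on Theorem~\ref{dic_guz} for those values of $\gamma$ for which the sign condition on $V_t(0)$ fails but the initial data still lies below the mass-energy threshold.

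The main computational input is the effect of the quadratic phase on the conserved and virial quantities. Since $|u_0^\gamma|=|v_0|$, one has $M[u_0^\gamma]=M[v_0]$ and $\mathcal{MP}[u_0^\gamma]=\mathcal{MP}[v_0]$. A short computation using $\nabla u_0^\gamma = e^{i\gamma|x|^2}(\nabla v_0 + 2i\gamma x v_0)$ yields
\begin{align*}
V_{u^\gamma}(0) &= V_{v_0}(0),\\
V_{u^\gamma,t}(0) &= V_{v_0,t}(0) + 8\gamma V_{v_0}(0),\\
E[u_0^\gamma] &= E[v_0] + \tfrac{\gamma}{2}V_{v_0,t}(0) + 2\gamma^2 V_{v_0}(0).
\end{align*}
The key algebraic observation is the $\gamma$-independent identity
$$V_{u^\gamma,t}(0)^2 - 32\,E[u_0^\gamma]\,V_{u^\gamma}(0) = V_{v_0,t}(0)^2 - 32\,E[v_0]\,V_{v_0}(0),$$
which, combined with $M[u_0^\gamma]=M[v_0]$ and $V_{u^\gamma}(0)=V_{v_0}(0)$, reduces to
$$\mathcal{ME}[u_0^\gamma]\Big(1-\tfrac{V_{u^\gamma,t}(0)^2}{32 E[u_0^\gamma] V_{u^\gamma}(0)}\Big) = \mathcal{ME}[v_0]\Big(1-\tfrac{V_{v_0,t}(0)^2}{32 E[v_0] V_{v_0}(0)}\Big).$$
A case analysis on $\operatorname{sgn} E[v_0]$, using only $\mathcal{ME}[v_0]<1$, shows that the right-hand side is always strictly less than $1$, so \eqref{cond1} holds for $u_0^\gamma$ for every $\gamma\in\Real$.

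For part (i), $\gamma<0$: condition \eqref{condb1} is immediate since $\mathcal{MP}[u_0^\gamma]=\mathcal{MP}[v_0]>1$. If $V_{u^\gamma,t}(0)\leq 0$, condition \eqref{cond2} also holds and Theorem~\ref{teo1}(i) yields blow-up in finite positive time. Otherwise $V_{u^\gamma,t}(0)>0$ forces $\gamma$ to lie strictly between the two zeros $0$ and $-V_{v_0,t}(0)/(4V_{v_0}(0))$ of the quadratic $q(\gamma):=2\gamma^2 V_{v_0}(0)+\tfrac{\gamma}{2}V_{v_0,t}(0)$; on this open interval $q(\gamma)<0$, so the formula $\mathcal{ME}[u_0^\gamma]=\mathcal{ME}[v_0] + (\text{positive constant})\cdot q(\gamma)$ gives $\mathcal{ME}[u_0^\gamma]<\mathcal{ME}[v_0]<1$, placing $u_0^\gamma$ strictly below the threshold. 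Since $V_{u^\gamma}(0)<\infty$, Theorem~\ref{dic_guz}(i) then yields blow-up in finite positive time.

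Part (ii), $\gamma>0$, is entirely analogous: $\mathcal{MP}[u_0^\gamma]<1$ gives \eqref{conds1}; if $V_{u^\gamma,t}(0)\geq 0$ one applies Theorem~\ref{teo1}(ii), and if $V_{u^\gamma,t}(0)<0$ the same sign analysis of $q$ forces $\mathcal{ME}[u_0^\gamma]<\mathcal{ME}[v_0]<1$, so the standard coercivity argument below the mass-energy threshold yields \eqref{limsup}. The scattering conclusion follows from the last sentence of Theorem~\ref{teo1}(ii) under the stated $b$-range hypothesis, noting that a radial $v_0$ produces a radial $u_0^\gamma$. The only subtle step is the algebraic identity together with the sign analysis of $q$; everything else is routine case-splitting.
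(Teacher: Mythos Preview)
Your proof is correct and essentially mirrors the paper's: both establish \eqref{cond1} via the $\gamma$-invariance of $E - V_t(0)^2/(32V(0))$ (the paper's identity \eqref{EuEv}), and both handle the sign condition on $V_t(0)$ through the quadratic dependence of $E[u_0^\gamma]$ on $\gamma$. The only difference is organizational---the paper first reduces to $\mathcal{ME}[u_0^\gamma]\geq 1$ and shows the required sign of $V_t(0)$ is then forced (via the root $\gamma_c^+$ of $\mathcal{ME}[u_0^\gamma]=1$), while you case-split on the sign of $V_{u^\gamma,t}(0)$ and show that its failure forces $q(\gamma)<0$, hence $\mathcal{ME}[u_0^\gamma]<1$; these are contrapositives of the same quadratic computation.
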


\begin{re}\label{re_quad}
With the above corollary, we can predict the behavior of a class of solutions with arbitrarily large energy. If $\mathcal{ME}[v_0] < 1$, then
$$
E[u_0^{\gamma}] = 4\gamma^2 \|xv_0\|_{L^2}^2+4\gamma \text{ \textup{Im}} \int x \cdot \nabla v_0 \bar{v}_0  + E[v_0]
$$

and $E[u_0^\gamma]  \to  +\infty$ as $\gamma \to \pm \infty$.
\end{re}

\begin{re} Note that the statement of Theorem \ref{teo1} is not symmetric in time as the statement of Theorem \ref{dic_guz}. Indeed, Corollary \ref{quad_Q} below shows solutions with different behaviors in positive and negative times.
\end{re}

\begin{cor}\label{quad_Q}Let $\gamma \in \Real$ and $Q^\gamma$ be the solution to \eqref{PVI} with initial data
$$
Q_0^\gamma = e^{i\gamma|x|^2}Q.
$$
\begin{itemize}
\item[(i)] If $\gamma > 0$, then $Q^\gamma$ is globally defined on $[0,+\infty)$, scatters forward in time and blows up backwards in time.
\item[(ii)] If $\gamma < 0$, then $Q^\gamma$ is globally defined on $(-\infty,0]$, scatters backward in time and blows up forward in time.
\end{itemize}
\end{cor}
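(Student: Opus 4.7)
The plan is to apply Theorem \ref{teo1} not at $t=0$, where $Q_0^\gamma$ sits precisely on the boundary of both the scattering and blow-up regions, but at a slightly later time $t_0>0$, where the required strict inequalities hold. By the time-reversal symmetry $Q^{-\gamma}(x,t)=\overline{Q^\gamma(x,-t)}$ of the INLS, it suffices to prove the two forward statements: (a) forward scattering of $Q^\gamma$ for $\gamma>0$, and (b) forward blow-up of $Q^\gamma$ for $\gamma<0$; the backward parts of (i) and (ii) then follow from time reversal.

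The preparatory computation is as follows. Since $Q$ is real, direct expansion of $u_0=e^{i\gamma|x|^2}Q$ yields $M[u_0]=M[Q]$, $E[u_0]=E[Q]+2\gamma^2\|xQ\|_{L^2}^2$, $V(0)=\|xQ\|_{L^2}^2$ and $V_t(0)=8\gamma\|xQ\|_{L^2}^2$, while \eqref{d2var} combined with the Pohozaev identities \eqref{pohozaev}--\eqref{EQ} (which make the $\gamma=0$ contribution cancel) gives $V_{tt}(0)=32\gamma^2\|xQ\|_{L^2}^2$. A short algebraic manipulation then yields the two borderline identities $\mathcal{MP}[u_0]=1$ and $\mathcal{ME}[u_0]\bigl(1-V_t(0)^2/(32E[u_0]V(0))\bigr)=1$, so $Q_0^\gamma$ sits exactly on the threshold hypersurface where Theorem \ref{teo1} just fails to apply.

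I then need to show that the solution moves strictly into the interior of the relevant region for every small $t_0>0$. Differentiating the potential energy and integrating by parts (using $\nabla\cdot(|x|^{-b}x)=(N-b)|x|^{-b}$) gives $\tfrac{d}{dt}\int|x|^{-b}|u|^{p+1}\big|_{t=0}=-2\gamma(N(p-1)+2b)\int|x|^{-b}Q^{p+1}$, so $\mathcal{MP}[u(t_0)]<1$ for $\gamma>0$ and $\mathcal{MP}[u(t_0)]>1$ for $\gamma<0$, and by continuity $V_t(t_0)$ inherits the sign of $V_t(0)$. To verify \eqref{cond1} strictly at $t_0$, note that $M$ and $E$ are conserved, so the only time dependence in the left-hand side of \eqref{cond1} is through $g(t):=V_t(t)^2/V(t)$. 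The identity $2V(0)V_{tt}(0)=V_t(0)^2$ forces $g'(0)=0$, and a direct computation using $V_{ttt}(0)=\tfrac{8\gamma(N(p-1)+2b)(N(p-1)+2b-4)}{p+1}\int|x|^{-b}Q^{p+1}$ (which follows from differentiating \eqref{d2var} once along the flow and using the previous display) gives $g''(0)=2V_t(0)V_{ttt}(0)/V(0)>0$, since $V_t(0)$ and $V_{ttt}(0)$ share the sign of $\gamma$ and the mass-supercritical condition forces $N(p-1)+2b-4>0$. Hence $g(t_0)>g(0)$ for small $t_0>0$, and \eqref{cond1} becomes strict at $u(t_0)$.

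With the three strict inequalities in place at $t_0$, and using that radial symmetry and finite variance are preserved by the flow (the latter via the standard estimate $|V_t|\le 4V^{1/2}\|\nabla u\|_{L^2}$), Theorem \ref{teo1} applied to $u(\cdot,t_0)$ as new initial data yields forward scattering when $\gamma>0$ and finite-time forward blow-up when $\gamma<0$, establishing (a) and (b). The main technical obstacle I foresee is the third-derivative computation of the variance: it reduces to differentiating the potential energy along the flow, which requires a careful integration by parts against the weight $|x|^{-b}$ together with the Pohozaev-type cancellations that make the $\gamma=0$ part disappear; once this identity is in hand the sign analysis and the application of Theorem \ref{teo1} are mechanical.
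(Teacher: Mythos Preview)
Your proof is correct and follows essentially the same strategy as the paper: verify that $Q_0^\gamma$ sits exactly on the boundary of the regions in Theorem~\ref{teo1} at $t=0$, then differentiate along the flow to obtain strict inequalities at a small $t_0>0$ where the theorem applies. The paper packages the second--order computation through $z=\sqrt{V}$ and the auxiliary function $F(t)$ (deducing $z_{tt}(0)=0$ from the equality case of Gagliardo--Nirenberg), while you work directly with $g(t)=V_t^2/V$ and the Pohozaev identities to get $g'(0)=0$ and $g''(0)=2V_t(0)V_{ttt}(0)/V(0)>0$; these are equivalent formulations of the same third--derivative--of--variance computation, and your constant $-2\gamma(N(p-1)+2b)$ for $\tfrac{d}{dt}\int|x|^{-b}|u|^{p+1}\big|_{t=0}$ is in fact the correct one.
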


\subsection{Blow-up criteria}

The blow up criterion of \citet{VPT}, Zakharov \cite{ZV72} and Glassey \cite{Glassey} for the NLS use the second derivative of the variance $V(t)$ to show that finite variance, negative energy solutions blow up in finite time. The second derivative of the variance is also used in \citet{lushnikov}, but with an approach based on classical mechanics, resulting in a finer blow-up criterion. This and and another criteria were proven in \citet{HPR} for the 3D cubic NLS. The argument was extended in \citet{DR_Going} to the focusing mass-supercritical NLS in any dimension, and examples were given to show that these new criteria are not equivalent to the previous ones. We extend these criteria for the focusing, mass-supercritical INLS equation in any dimension:

\begin{teo}\label{lush1} Suppose that $u_0\in H^1(\mathbb{R}^N)$, $N \geq 1$ and $V (0) < \infty$.  The following inequality is a sufficient condition for blow-up in finite time for solutions to (1.1) with $0 < s_c < 1$ and
$E[u_0] > 0$\begin{equation}
\frac{V_t(0)}{M[u_0]} < \sqrt{8Ns_c} g \left(\frac{4}{Ns_c}\frac{E[u_0]V(0)}{M[u_0]^2}\right),
\end{equation}
where
\begin{equation}\label{g}
g(x) =
\left\{
\begin{array}{l}
\sqrt{\frac{1}{kx^k}+x-(1+\frac{1}{k})}\, \mbox{ if }0 < x\leq 1\\
-\sqrt{\frac{1}{kx^k}+x-(1+\frac{1}{k})}\, \mbox{ if } x\geq 1
\end{array}
\right.
\mbox{ with }k = \frac{(p -1)s_c}{2}.
\end{equation}
\end{teo}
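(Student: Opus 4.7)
The starting point is the virial identity \eqref{d2var}. Applying the Cauchy--Schwarz inequality to \eqref{dvar} yields $V_t(t)^2 \leq 16\, V(t)\, \|\nabla u(t)\|_{L^2}^2$; substituting into \eqref{d2var} and using the identities $N(p-1)+2b = 4(k+1)$ and $N(p-1)+2b-4 = 4k$, we obtain the fundamental differential inequality
\begin{equation*}
V_{tt}(t) + \frac{k\, V_t(t)^2}{2\, V(t)} \leq 16(k+1)\, E[u_0],\qquad t \in [0,T_+).
\end{equation*}
This is the inhomogeneous analogue of the ODI used by \citet{lushnikov}, \citet{HPR} and \citet{DR_Going} in the NLS setting, and the remaining steps follow their approach.

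I would next rescale to dimensionless variables $x(t) := 4 E[u_0]\, V(t)/(Ns_c\, M[u_0]^2)$ and $\beta(t) := V_t(t)/M[u_0]$, so that the theorem's hypothesis reads $\beta(0) < \sqrt{8Ns_c}\, g(x(0))$. The function $g$ in \eqref{g} satisfies $g(1)=0$ and $(g^2)'(x) = 1 - x^{-(k+1)}$, hence $g^2$ is strictly convex on $(0,\infty)$, attains its unique minimum $0$ at $x=1$ and diverges to $+\infty$ as $x \to 0^+$ and as $x \to +\infty$. A direct calculation identifies the curve $\Gamma := \{\beta = \sqrt{8Ns_c}\, g(x)\}$ as the natural phase-plane separatrix for the equality form of the fundamental inequality, refined by the Heisenberg-type constant that is forced by requiring the turning point $\beta = 0$ to be attained exactly at $x = 1$. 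The theorem's assumption places the initial datum in the open region $\mathcal{R} := \{\beta < \sqrt{8Ns_c}\, g(x)\}$ lying below $\Gamma$.

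The heart of the proof is an invariance/barrier argument: show that $\mathcal{R}$ is forward invariant under the INLS flow. Suppose, for contradiction, that a trajectory starting in $\mathcal{R}$ first touches $\Gamma$ at some $t_* > 0$. Tangency forces $\beta'(t_*)$ to be at least the tangential slope of $\Gamma$ at the contact point; on the other hand, substituting $\beta(t_*) = \sqrt{8Ns_c}\, g(x(t_*))$ into the fundamental inequality controls $V_{tt}(t_*)$, and hence $\beta'(t_*)$, from above by exactly the same slope, yielding the contradiction. The main obstacle is the careful handling of the point $x = 1$, where $g$ has a corner and the two branches of \eqref{g} meet, and of possible sign changes of $\beta$ along the trajectory; one treats the cases $x(0) < 1$, $x(0) = 1$ and $x(0) > 1$ separately, and uses the strictness of Cauchy--Schwarz (non-saturation for generic INLS solutions) to secure a strict barrier inequality.

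Once the invariance of $\mathcal{R}$ is established, the geometry of $g$ forces $x(t) \to 0$ in finite positive time: on intervals where $V_t \leq 0$, integrating the fundamental inequality supplies a strictly positive lower bound for $V_t^2$ as $V \to 0$, preventing any turnaround, while the $L^1$-integrability of $dV/|V_t|$ near the singularity makes the collapse time finite. This proves $T_+ < \infty$.
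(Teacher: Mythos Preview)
Your ``fundamental differential inequality'' is too weak, and this is the essential gap.  Applying plain Cauchy--Schwarz to \eqref{dvar} gives only $V_t^2\le 16\,V\,\|\nabla u\|_{L^2}^2$, which after substitution into \eqref{d2var} produces
\[
V_{tt}+\frac{k\,V_t^2}{2V}\;\le\;16(k+1)\,E[u_0].
\]
But $\int x\cdot\nabla u\,\bar u$ has \emph{real} part equal to $-\tfrac{N}{2}M[u_0]$ (one integration by parts), so Cauchy--Schwarz actually yields the stronger bound
\[
V\,\|\nabla u\|_{L^2}^2\;\ge\;\Bigl|\int x\cdot\nabla u\,\bar u\Bigr|^2\;=\;\frac{N^2}{4}M[u_0]^2+\frac{V_t^2}{16},
\]
and the correct ODI carries the additional term $-N^2(p-1)s_c\,M[u_0]^2/V$ on the right-hand side (this is \eqref{prinineq} in the paper).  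That term is not a refinement you can impose afterwards by ``forcing the turning point to $x=1$''; it is the entire source of the potential well.  With your ODI alone, the substitution $B=V^{(k+2)/2}$ that kills the friction term leaves $B_{tt}\le c\,E[u_0]\,B^{k/(k+2)}$ with a strictly positive right-hand side when $E[u_0]>0$; this inequality admits global solutions (e.g.\ $B$ affine) and cannot force blow-up.  Equivalently, in your phase-plane picture the upper bound on $\dot\beta$ along $\Gamma$ exceeds the tangential slope of $\Gamma$ by a strictly positive amount, so $\Gamma$ is not a barrier.

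Once the full uncertainty principle is in place, the paper does not run a barrier argument in the $(x,\beta)$ plane.  It first eliminates the friction term $V_t^2/V$ via the power change $V=B^{1/(\alpha+1)}$ with $\alpha=k/2$, rescales to a purely mechanical inequality $\omega\Phi_{ss}\le\Phi^\gamma-\Phi^\delta$, and then argues through the monotonicity of the mechanical energy $\mathcal E(s)=\tfrac{\omega}{2}\Phi_s^2+U(\Phi)$, distinguishing three cases according to whether $\mathcal E(0)$ lies below, above, or at $U_{\max}$.  This energy formulation handles cleanly the corner at $x=1$ and the sign changes of $\beta$ that you flag as obstacles; translating those three cases back to the original variables is what produces the piecewise definition of $g$.
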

\begin{teo}\label{lush2} Suppose that $u_0\in H^1(\Real^N)$ and $V (0) < \infty$. The following inequality is a sufficient condition for blow-up in finite time for solutions to \eqref{PVI} with $0 < s_c < 1$ and $E[u_0]>0$
\begin{equation}
\frac{V_t(0)}{M[u_0]}<\frac{4\sqrt 2 M[u_0]^{\frac{1}{2}-\frac{p+1}{N(p-1)+2b}}E[u_0]^{\frac{s_c}{N}}}{C}g\left(C^2\frac{E[u_0]^{\frac{4}{N(p-1)+2b}}V(0)}{M[u_0]^{1+\frac{2(p+1)}{N(p-1)+2b}}}\right),
\end{equation}
where $g$ is defined in \eqref{g},
\begin{equation}
C=\left(\frac{2(p+1)}{s_c(p-1)}(C_{p,N})^{\frac{N(p-1)+2b}{2}+(p+1)}\right)^{\frac{2}{N(p-1)+2b}}.
\end{equation}
and $C_{p,N}$ the a sharp constant in the interpolation inequality \eqref{GN1}.
\end{teo}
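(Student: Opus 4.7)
The plan is to follow the strategy pioneered by Lushnikov \cite{lushnikov} and developed in the mass-supercritical NLS setting by Holmer--Platte--Roudenko \cite{HPR} and Duyckaerts--Roudenko \cite{DR_Going}: interpret $V(t)$ as the trajectory of a fictitious one-dimensional particle in an effective potential, and extract the blow-up criterion from a phase-plane analysis of this mechanical system. Since $s_c>0$ is equivalent to $\alpha:=N(p-1)+2b>4$, the exponent $k=(p-1)s_c/2=(\alpha-4)/4$ appearing in the definition of $g$ is strictly positive, which is what makes the potential barrier integrable at the singular endpoint.

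First, I would start from the second virial identity \eqref{d2var}, $V_{tt}(t)=4\alpha E[u_0]-2(\alpha-4)\|\nabla u(t)\|_{L^2}^2$, and combine it with the conservation of mass and energy, the Cauchy--Schwarz bound $V_t(t)^2\leq 16V(t)\|\nabla u(t)\|_{L^2}^2$, and the sharp Gagliardo--Nirenberg inequality \eqref{GN1} with constant $C_{p,N}$. After algebraic rearrangement this should produce an autonomous ODE inequality
\begin{equation*}
V_{tt}(t)\leq \mathcal{F}(V(t)),
\end{equation*}
where $\mathcal{F}$ is an explicit function of $V$ built from $M[u_0]$, $E[u_0]$ and $C_{p,N}$. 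Multiplying by $V_t$ and integrating in time, while splitting into the cases $V_t\geq 0$ and $V_t\leq 0$ to control signs, yields a mechanical-energy-type inequality $\tfrac{1}{2}V_t(t)^2+\mathcal{U}(V(t))\leq \tfrac{1}{2}V_t(0)^2+\mathcal{U}(V(0))$ with $\mathcal{U}'=-\mathcal{F}$.

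Next, after an affine rescaling $V(t)=\lambda\widetilde v(\mu t)$ with $\lambda,\mu$ chosen in terms of $M$, $E$ and $C_{p,N}$ as dictated by the statement, $\mathcal{U}$ reduces to the canonical form $U(\widetilde v)=\widetilde v+\frac{1}{k\widetilde v^{k}}$, which has a unique minimum at $\widetilde v=1$ of height $1+1/k$ and tends to $+\infty$ as $\widetilde v\to 0^{+}$ or $\widetilde v\to\infty$. One checks directly that $g^2(\widetilde v)=U(\widetilde v)-U(1)$, so the hypothesis on $V_t(0)$ translates exactly into the statement that the initial mechanical energy of $\widetilde v$ lies strictly below the barrier $U(1)$, while the sign of $g$ on $(0,1)$ versus $(1,\infty)$ forces the initial velocity to point towards $\widetilde v=0$. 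A standard phase-plane argument (finiteness of the ``falling time'' integral $\int_0^{\widetilde v(0)}d\widetilde v/\sqrt{2(\mathcal{E}_0-U(\widetilde v))}$, which converges near $\widetilde v=0$ precisely because $k>0$) shows that $\widetilde v$, and hence $V$, vanishes at some finite time $T^{+}$. Heisenberg's uncertainty $\|\nabla u(t)\|_{L^2}^2\geq N^{2}M[u_0]^2/(4V(t))$ then forces $\|\nabla u(t)\|_{L^2}\to\infty$ as $t\uparrow T^{+}$, giving blow-up in $H^1$.

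The main obstacle is the very first step: producing the ODE inequality $V_{tt}\leq \mathcal{F}(V)$ with precisely the constants that normalize to the specific form $C=\bigl(\tfrac{2(p+1)}{s_c(p-1)}C_{p,N}^{\alpha/2+(p+1)}\bigr)^{2/\alpha}$ stated in the theorem. The unusual exponent $\alpha/2+(p+1)$ on $C_{p,N}$ suggests that the derivation must interleave Gagliardo--Nirenberg (applied with sharp constant to the nonlinear potential term in the virial) with Cauchy--Schwarz and the energy identity in a specific order, rather than using any of them in isolation, and the exponent combinatorics will need to be tracked carefully. Once $\mathcal{F}$ has the right form, the mechanical-energy integration, the rescaling, and the phase-plane conclusion outlined above are routine.
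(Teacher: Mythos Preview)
Your mechanical-analogy skeleton is right, but the key input you are missing is \emph{not} Gagliardo--Nirenberg \eqref{GN1} combined with Cauchy--Schwarz. The paper's proof of Theorem~\ref{lush2} never touches either; instead it rewrites the virial identity as
\[
V_{tt}(t)=16E[u_0]-\frac{8(p-1)s_c}{p+1}\int|x|^{-b}|u(t)|^{p+1}\,dx
\]
and then bounds the potential term from below via the sharp interpolation inequality of Proposition~\ref{proplush2},
\[
\|u\|_{L^2}^{2}\leq C_{p,N}\Bigl(\|xu\|_{L^2}^{\frac{N(p-1)+2b}{2}}\bigl\||\cdot|^{-\frac{b}{p+1}}u\bigr\|_{L^{p+1}}^{p+1}\Bigr)^{\frac{2}{N(p-1)+2(p+1)+2b}},
\]
which immediately gives $V_{tt}\leq 16E[u_0]-c\,M[u_0]^{\beta}V(t)^{-\frac{N(p-1)+2b}{4}}$ with \emph{no} $V_t$ term. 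The ``unusual exponent $\alpha/2+(p+1)$ on $C_{p,N}$'' that puzzled you is exactly the fingerprint of this inequality (note the exponent $N(p-1)+2(p+1)+2b$ in its denominator), not of any interleaving of \eqref{GN1} with Cauchy--Schwarz. If you carry out your plan---uncertainty principle plus \eqref{GN1}---you will instead produce an inequality containing $|V_t|^2/V$; cancelling that term forces the nonlinear substitution $V=B^{1/(\alpha+1)}$ and, after normalization, lands you on Theorem~\ref{lush1}, not Theorem~\ref{lush2}. The two criteria really come from two different a~priori inequalities. (The reference to \eqref{GN1} in the theorem's statement is misleading; the $C_{p,N}$ actually used is the sharp constant of \eqref{interp}.)

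A secondary issue: your phase-plane reading of the hypothesis is incomplete. The condition $v_s(0)<g(v(0))$ does not always mean ``initial mechanical energy below the barrier $U(1)$''. When $v(0)\geq 1$ it reads $v_s(0)<-f(v(0))$, i.e.\ energy \emph{above} the barrier together with negative velocity. The paper handles this by splitting into the three cases $(A^*)$, $(B^*)$, $(C^*)$ and invoking the concavity/bootstrap argument of Proposition~\ref{proplush}; your ``falling-time integral'' argument covers only case $(A^*)$ as stated.
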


\begin{re}
For real-valued initial data, Theorem \ref{lush2} is an improvement over Theorem \ref{lush1} if $$\mathcal{ME}[u_0] > \left(\frac{Ns_c C^2}{4}\right)^{\frac{N(p-1)+2b}{N(p-1)+2b-4}}.$$
\end{re}
\begin{re}
In both theorems, the restriction $s_c < 1$ is only needed to ensure the local well-posedness.
\end{re}

This paper is structured as follows: In section \ref{s_main_prop}, we prove the boundedness and blow-up part of Theorem \ref{teo1}. The scattering part is proven in section \ref{s_scat}. In section \ref{s_blowup_criteria}, we show two non-equivalent blow-up criteria for the INLS (Theorems \ref{lush1} and \ref{lush2}).

\section{Acknowledgments}
The authors thank Luiz Gustavo Farah (UFMG) and Svetlana Roudenko (FIU) for their valuable comments and suggestions which helped improve the manuscript. Part of this work was done when the first author was visiting the Institute for Pure and Applied Mathematics (IMPA), for which all authors are very grateful as it boosted the energy into the research project.
\section{Boundedness and Blow-up}\label{s_main_prop}

We start this section with the proof of the equivalence between using $\mathcal{MK}[u_0]$ and $\mathcal{MP}[u_0]$ in the dichotomy when $\mathcal{ME}[u_0] \leq 1$.

\begin{prop}\label{MPequiv}
If $f \in  H^1(\Real ^N)$, then
\begin{equation}\label{ida}
\mathcal{MK}[f] < 1 \Longrightarrow \mathcal{MP}[f]<1.
\end{equation}
Furthermore, assume $\mathcal{ME}[f] \leq 1$. Then
\begin{equation}\label{volta}
\mathcal{MK}[f] < 1 \Longleftrightarrow\mathcal{MP}[f]<1.
\end{equation}
\end{prop}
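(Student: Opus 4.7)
The plan is to reduce both statements to elementary algebraic identities, using only the sharp Gagliardo--Nirenberg inequality \eqref{GN1} and the Pohozaev identities \eqref{pohozaev}--\eqref{EQ}.

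For the forward implication \eqref{ida}, which does not use any hypothesis on $\mathcal{ME}[f]$, I would apply the sharp Gagliardo--Nirenberg inequality \eqref{GN1} directly to $f$. Since $Q$ saturates \eqref{GN1} (this being precisely how $C_{p,N}$ is normalized in \eqref{cnstGN}), the constant cancels in the ratio defining $\mathcal{MP}[f]$. Setting $\alpha = \tfrac{N(p-1)+2b}{2}$ and using the exponent identities $\alpha - 2 = (p-1)s_c$ and $p+1-\alpha = (p-1)(1-s_c)$, the mass factors $M[f]^{(1-s_c)/s_c}$ should balance precisely against $\|f\|_{L^2}^{p+1-\alpha}$ to yield the clean bound
\[
\mathcal{MP}[f] \leq \mathcal{MK}[f]^{\alpha/2}.
\]
Since $\alpha/2 = 1 + \tfrac{(p-1)s_c}{2} > 1$, this gives \eqref{ida} at once.

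For the converse direction of \eqref{volta}, a direct linear identity is available. Writing $E[f] = \tfrac12\|\nabla f\|_{L^2}^2 - \tfrac{1}{p+1}\int |x|^{-b}|f|^{p+1}$, multiplying by $M[f]^{(1-s_c)/s_c}$ and dividing by $M[Q]^{(1-s_c)/s_c} E[Q]$, the Pohozaev identities \eqref{pohozaev}--\eqref{EQ} turn the two prefactors into explicit constants depending only on $p$ and $s_c$. Setting $k = \tfrac{(p-1)s_c}{2}$, I expect the identity
\[
\mathcal{ME}[f] = \left(1+\tfrac{1}{k}\right)\mathcal{MK}[f] - \tfrac{1}{k}\,\mathcal{MP}[f],
\]
equivalently $\mathcal{MP}[f] = (1+k)\mathcal{MK}[f] - k\,\mathcal{ME}[f]$. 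Assuming $\mathcal{MP}[f] < 1$ and $\mathcal{ME}[f] \leq 1$, this rearranges to $(1+k)\mathcal{MK}[f] < 1 + k$, forcing $\mathcal{MK}[f] < 1$, which combined with \eqref{ida} completes \eqref{volta}.

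No substantive obstacle is expected: both implications reduce to careful bookkeeping of the exponents so that the $L^2$ factors balance exactly. The only genuine analytic input is the sharpness clause in \eqref{GN1}, and the rest is manipulation of the two Pohozaev identities. In particular, neither statement requires any information about the flow of \eqref{PVI}; both are purely functional-analytic facts about $H^1$ functions.
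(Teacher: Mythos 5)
Your proposal is correct and takes essentially the same route as the paper: your bound $\mathcal{MP}[f] \le \mathcal{MK}[f]^{\frac{N(p-1)+2b}{4}}$ is exactly the paper's rewriting of the sharp Gagliardo--Nirenberg inequality, which gives \eqref{ida}. Your linear identity $\mathcal{MP}[f] = (1+k)\,\mathcal{MK}[f] - k\,\mathcal{ME}[f]$, obtained from the energy definition and the Pohozaev identities, is just a cleaner repackaging of the paper's energy-comparison chain for the converse, so both implications match the paper's argument.
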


\begin{proof}
We write the sharp Gagliardo-Nirenberg inequality \eqref{GN1} as
\begin{equation}
\left(\mathcal{MP}[f]\right)^\frac{4}{N(p-1)+2b}\leq \mathcal{MK}[f],
\end{equation}
and \eqref{ida} follows. Now, if $\mathcal{MP}[f]< 1$ and $\mathcal{ME}[f] \leq 1$, then
\begin{equation}
M[Q]^\frac{1-s_c}{s_c}E[Q] \geq M[f]^\frac{1-s_c}{s_c}E[f] > \frac{1}{2} M[f]^\frac{1-s_c}{s_c}\int|\nabla f|^2\,dx-\frac{1}{p+1}M[Q]^\frac{1-s_c}{s_c}\int|x|^{-b}|Q|^{p+1}\,dx
\end{equation}
taking the first and last member, we conclude $\mathcal{MK}[f]< 1$.
\end{proof}
We also  point that the inequalities in \eqref{volta} can be replaced by equalities: we can scale $f$ so that $M[f] = M[Q]$. By similar arguments as the ones used in proving \eqref{ida} and \eqref{volta}, $\mathcal{MP}[f] = 1$ or $\mathcal{MK}[f] = 1$ in the case $\mathcal{ME}[f]\leq 1$, implies $\mathcal{MP}[f] = \mathcal{MK}[f] = \mathcal{ME}[f] = 1$. In this case, $f$ is equal to $Q$ up to scaling and phase.

We now turn to the proof of Theorem \ref{teo1}.  Start rewriting the Gagliardo-Nirenberg inequality \eqref{GN1} as
\begin{equation}\label{GN}
\left(\int|x|^{-b}|f|^{p+1}\,dx\right)^{\frac{4}{N(p-1)+2b}}\leq C_QM[f]^{\kappa}\int\, |\nabla u|^2\,dx\,\,\,\,\,\,\,\,\,\,\,\kappa=\frac{2(p+1)}{N(p-1)+2b}-1,
\end{equation}
where
\begin{align}
C_Q&:=(C_{p,N})^{\frac{4}{N(p-1)+2b}}=\frac{2(p+1)}{N(p-1)+2b}\frac{\left(\displaystyle\int|x|^{-b}|Q|^{p+1}\,dx\right)}{M[Q]^{\kappa}}^{\frac{4}{N(p-1)+2b}-1}\nonumber\\&
\nonumber\\&
=\left(\frac{8(p+1)}{A}\right)^{\frac{4}{N(p-1)+2b}}\frac{s_c(p-1)}{N(p-1)+2b}\cdot\frac{E[Q]}{M[Q]^{\kappa}}^{\frac{4}{N(p-1)+2b}-1}
\end{align}
and
\begin{equation}
    A:=2(N(p-1)+2b-4)=4(p-1)s_c.
\end{equation}

We use the following Cauchy-Schwarz inequality, proved by Banica \cite{Bn}. We include the proof here for the sake of completeness.

\begin{lemma}\label{cotaVt} Let $f\in H^1(\Real^N)$ such that $|x|f\in L^2(\Real^N)$. Then,
\begin{equation}
\left(\text{Im}\int x\cdot \nabla f\, \overline{f}\,dx\right)^2\leq \int|x|^2|f|^2\,dx\left[\int |\nabla f|^2\,dx-\frac{1}{C_QM^\kappa}\left(\int|x|^{-b}|f|^{p+1}\,dx\right)^{\frac{4}{N(p-1)+2b}}\right].
\end{equation}
\end{lemma}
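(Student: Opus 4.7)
The plan is to introduce a one-parameter family of gauge transforms and apply the sharp Gagliardo-Nirenberg inequality \eqref{GN1}, turning the non-negativity of a resulting quadratic in the parameter into the desired Cauchy-Schwarz-type inequality.

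Concretely, for $\lambda \in \Real$, I set $v_{\lambda}(x) = e^{-i\lambda|x|^2/2}f(x)$, so that $|v_\lambda| = |f|$ pointwise. A direct component-wise computation gives
\begin{equation}
|\nabla v_\lambda|^2 = |\nabla f|^2 - 2\lambda\, \text{Im}\bigl(\overline{f}\, x\cdot\nabla f\bigr) + \lambda^2 |x|^2 |f|^2,
\end{equation}
so after integration
\begin{equation}
\int |\nabla v_\lambda|^2\,dx = \lambda^2 \int |x|^2 |f|^2\,dx - 2\lambda\, \text{Im}\!\int x\cdot\nabla f\, \overline{f}\,dx + \int |\nabla f|^2\,dx.
\end{equation}

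Next, since $|v_\lambda|=|f|$, we have both $M[v_\lambda]=M[f]$ and $\int |x|^{-b}|v_\lambda|^{p+1}\,dx = \int |x|^{-b}|f|^{p+1}\,dx$. Applying the sharp Gagliardo-Nirenberg inequality \eqref{GN1} (in the equivalent form \eqref{GN}) to $v_\lambda$ yields
\begin{equation}
\int |\nabla v_\lambda|^2\,dx \ \geq\ \frac{1}{C_Q\, M[f]^\kappa}\left(\int |x|^{-b}|f|^{p+1}\,dx\right)^{\frac{4}{N(p-1)+2b}}.
\end{equation}

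Combining the two displays, the quadratic polynomial in $\lambda$
\begin{equation}
P(\lambda) := \lambda^2 \!\!\int |x|^2|f|^2\,dx - 2\lambda\, \text{Im}\!\int x\cdot\nabla f\,\overline{f}\,dx + \int|\nabla f|^2\,dx - \frac{1}{C_Q M[f]^\kappa}\!\left(\int|x|^{-b}|f|^{p+1}\,dx\right)^{\!\frac{4}{N(p-1)+2b}}
\end{equation}
is non-negative for every $\lambda \in \Real$. Its discriminant must therefore be non-positive, which is exactly the claimed inequality after dividing by $4$.

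The only non-routine step is guessing the correct phase modulation. The usual Cauchy-Schwarz applied directly to $x\cdot\nabla f\,\overline{f}$ recovers only the weaker estimate without the Gagliardo-Nirenberg defect, so the key insight is that twisting $f$ by the quadratic phase $e^{-i\lambda|x|^2/2}$ leaves $|f|$ (and hence the two scale-invariant quantities $M$ and $\int|x|^{-b}|\cdot|^{p+1}$) unchanged, while introducing precisely the cross term $\text{Im}\int x\cdot\nabla f\,\overline{f}$ into $\|\nabla v_\lambda\|_{L^2}^2$. Once this is observed, the argument is just non-negativity of a one-variable quadratic.
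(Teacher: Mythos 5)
Your proof is correct and is essentially the same argument as in the paper: both apply the sharp Gagliardo--Nirenberg inequality \eqref{GN} to a quadratic-phase modulation of $f$ (the paper uses $e^{i\lambda|x|^2}f$, you use $e^{-i\lambda|x|^2/2}f$, which only reparametrizes $\lambda$) and conclude from the non-positivity of the discriminant of the resulting quadratic in $\lambda$. No substantive difference.
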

\begin{proof}
Given $f \in H^1(\Real^N)$ and $\lambda>0$, we have
$$\nabla \left(e^{i\lambda|x|^2}f\right)=2i\lambda e^{i\lambda|x|^2}xf+e^{i\lambda|x|^2}\nabla f=e^{i\lambda|x|^2}(2i\lambda\,x f+\nabla f).$$
Thus,
\begin{align*}
\int \left|\nabla \left(e^{i\lambda|x|^2}f\right)\right|^2\,dx&=\int e^{i\lambda |x|^2}(2i\lambda\,x f+\nabla f)e^{-i\lambda |x|^2}(-2i\lambda\,x \overline{f}+\nabla \overline{f})\,dx\\
&=4\lambda^2\int|x|^2|f|^{2}\,dx+4\lambda \text{ Im}\int x\cdot \nabla f\,\overline{f}\,dx+\int|\nabla f|^2\,dx
\end{align*}
and from the Gagliardo-Niremberg inequality \eqref{GN}, for all $\lambda\in \Real$ we get
\begin{align}\label{pol1}
C_QM[f]^\kappa\left[4\lambda^2\int |x|^2|f|^2\,dx+4\lambda \text{ Im}\int x\cdot\nabla f\,\overline{f}\,dx+\int|\nabla f|^2\,dx\right]-\left(\int|x|^{-b}|f|^{p+1}\,dx\right)^{\frac{4}{N(p-1)+2b}}\geq 0.
\end{align}
Note that the left-hand side of inequality above is a quadratic polynomial in $\lambda$ . The discriminant of this polynomial is non-positive, wich yields the conclusion of the lemma.
\end{proof}
\begin{proof}[Proof of Theorem \ref{teo1}]
We will assume
\begin{align}\label{ME}
\mathcal{ME}[u_0]\geq 1,
\end{align}
as the case $\mathcal{ME}[u_0] < 1$ has been proven by \citet{FG_scat}.
By \eqref{d2var}, we have
\begin{equation}\label{enerviri2}
\int |\nabla u|^2\,dx=\frac{4(N(p-1)+2b)E[u_0]-V_{tt}}{A}.
\end{equation}

Furthermore,
\begin{align}
\int |x|^{-b}|u|^{p+1}\,dx&=(p+1)\frac{8\|\nabla u\|^2_2-V_{tt}}{4(N(p-1)+2b)}\nonumber\\&=(p+1)\frac{16E[u_0]-V_{tt}}{4(N(p-1)+2b)}+\frac{16}{4(N(p-1)+2b)}\int |x|^{-b}|u|^{p+1}\,dx.
\end{align}
Solving the equality above for $\displaystyle\int |x|^{-b}|u|^{p+1}\,dx$, we have
\begin{align}\label{enervirip}
\int |x|^{-b}|u|^{p+1}\,dx&=(p+1)\frac{16E[u_0]-V_{tt}}{2A}.
\end{align}
Note that the expression \eqref{enervirip} implies that $V_{tt}\leq 16E[u_0]$ for all $t$. In view of the equation     \eqref{dvar}, the derivative of variance $V(t)$, and Lemma \ref{cotaVt} we get,
\begin{align}\label{des1}
(V_t(t))^2&=16\left(\text{Im}\int x\cdot \nabla u(t)\,\overline{u}(t)\,dx\right)^2\nonumber\\&\leq 16\int V(t)\left[\int |\nabla u(t)|^2\,dx-\frac{1}{C_QM[u_0]^\kappa}\left(\int|x|^{-b}|u(t)|^{p+1}\,dx\right)^{\frac{4}{N(p-1)+2b}}\right].
\end{align}
If  $z(t)=\sqrt{V(t)}$, then
$$z_t(t)=\frac{1}{2}\frac{V_t(t)}{\sqrt{V(t)}}.$$
Dividing \eqref{des1} by $V(t)$, using \eqref{enerviri2}, \eqref{enervirip} and \eqref{des1}, we have
\begin{align*}
(z_t(t))^2&=\frac{1}{4}\frac{(V_t(t))^{2}}{V(t)}\nonumber\\&
\leq 4\left[\frac{4(N(p-1)+2b)E[u_0]-V_{tt}}{A}-\frac{1}{C_QM[u_0]^\kappa}\left(\frac{(p+1)(16E[u_0]-V_{tt})}{2A}\right)^{\frac{4}{N(p-1)+2b}}\right],
\end{align*}
that is,
\begin{align}\label{ztmefi}
(z_t(t))^2\leq 4\varphi(V_{tt}),
\end{align}
where
\begin{align}
\varphi(\alpha)=\left[\frac{4(N(p-1)+2b)E[u_0]-\alpha}{A}-\frac{1}{C_QM[u_0]^\kappa}\left(\frac{(p+1)(16E[u_0]-\alpha)}{2A}\right)^{\frac{4}{N(p-1)+2b}}\right]
\end{align}
is defined for $\alpha\in (-\infty,16E[u_0]]$. We have
\begin{align}
\varphi'(\alpha)=-\frac{1}{A}+\frac{4}{C_QM[u_0]^{\kappa}(N(p-1)+2b)}\left(\frac{p+1}{2A}\right)^{\frac{4}{N(p-1)+2b}}(16E[u_0]-\alpha)^{\frac{4}{N(p-1)+2b}-1}.
\end{align}
Consider $\alpha_m\in (-\infty, 16E[u_0])$ such that $\varphi'(\alpha_m)=0$, that is,
\begin{align}\label{alfa}
\frac{1}{A}=\frac{4}{C_QM[u_0]^{\kappa}(N(p-1)+2b)}\left(\frac{p+1}{2A}\right)^{\frac{4}{N(p-1)+2b}}(16E[u_0]-\alpha_m)^{\frac{4}{N(p-1)+2b}-1}.
\end{align}
Since $s_c>0$,
$$\frac{4}{N(p-1)+2b}-1=\frac{4-N(p-1)-2b}{N(p-1)+2b}=-\frac{2s_c}{(p-1)(N(p-1)+2b)}<0,$$
therefore $\varphi$ is decreasing on $(-\infty, \alpha_m)$ and increasing on $(\alpha_m,16E[u_0]]$. Note that \eqref{alfa} implies
\begin{align}
\frac{\alpha_m}{8}=\frac{(\alpha_m-16E)(N(p-1)+2b)}{4A}+\frac{4(N(p-1)+2b)E}{A}-\frac{\alpha_m}{A}=
\varphi(\alpha_m).
\end{align}
Using \eqref{alfa} and \eqref{cnstGN}, we have
\begin{align}
\frac{E[Q]}{M[Q]^{\kappa}}^{\frac{4}{N(p-1)+2b}-1}=\frac{\left(E[u_0]-\frac{\alpha_m}{16}\right)}{M[u_0]^\kappa}^{\frac{4}{N(p-1)+2b}-1},
\end{align}
hence raising both sides to $\frac{2(p-1)}{N(p-1)+2b}$, we get
\begin{align}\label{charac}
\left(\frac{M[u_0]}{M[Q]}\right)^{\frac{1-s_c}{s_c}}\frac{E[u_0]-\frac{\alpha_m}{16}}{E[Q]}=1.
\end{align}
As a consequence of \eqref{ME}
\begin{equation}
\left(\frac{M[u_0]}{M[Q]}\right)^{\frac{1-s_c}{s_c}}\frac{E[u_0]-\frac{\alpha_m}{16}}{E[Q]}=1 \leq \mathcal{ME}[u_0]=\left(\frac{M[u_0]}{M[Q]}\right)^{\frac{1-s_c}{s_c}}\frac{E[u_0]}{E[Q]},
\end{equation}
i.e.,
\begin{equation}
\alpha_m\geq0,
\end{equation}
and by \eqref{cond1} and  \eqref{charac},
\begin{align}\label{Malfa}
(z_t(0))^2&=-\left(1-\frac{(V_t(0))^2}{32E[u_0]V(0)}\right)\frac{8E[u_0]\mathcal{ME}[u_0]}{\mathcal{ME}[u_0]}+8E[u_0]\nonumber\\&\geq -\frac{8E[u_0]}{\mathcal{ME}[u_0]}\left(\frac{M[u_0]}{M[Q]}\right)^{\frac{1-s_c}{s_c}}\frac{E[u_0]-\frac{\alpha_m}{16}}{E[Q]}+8E[u_0]\nonumber\\&= \frac{\alpha_m}{2}=4\varphi(\alpha_m).
\end{align}
We first prove case (i) of Theorem \ref{teo1}. Suppose that $u\in H^{1}(\Real^N)$ satisfies \eqref{condb1} and \eqref{cond2}. Note that  \eqref{cond2} is equivalent to
\begin{equation}\label{ztmezero}
z_t(0)=\frac{V_t(0)}{2\sqrt{V(0)}}\leq 0.
\end{equation}
In view of \eqref{EQ}, the assumption \eqref{condb1} means
\begin{equation}
\left(\frac{M[u_0]}{M[Q]}\right)^{\frac{1-s_c}{s_c}}\frac{A\displaystyle\int |x|^{-b}|u_0|^{p+1}\,dx}{(p+1)E[Q]}=\left(\frac{M[u_0]}{M[Q]}\right)^{\frac{1-s_c}{s_c}}\frac{\displaystyle\int |x|^{-b}|u_0|^{p+1}\,dx}{\displaystyle\int |x|^{-b}|Q|^{p+1}\,dx}>1
\end{equation}
and consequently, from \eqref{enervirip}
\begin{align}\label{mealfa}
V_{tt}(0)=-\frac{2A}{p+1}\int |x|^{-b}|u_0|^{p+1}+16E[u_0]<\alpha_m.
\end{align}

Note that, for all $t>0$
\begin{align}\label{eqztt}
z_{tt}(t)=\frac{d}{dt}\left[\frac{V_t(t)}{2\sqrt{V(t)}}\right]=\frac{V_{tt}(t)}{2\sqrt{V(t)}}-\frac{(V_t(t))^2}{4\sqrt{V(t)^3}}=\frac{1}{z(t)}\left(\frac{V_{tt}(t)}{2}-(z_t(t))^2\right).
\end{align}
Hence from \eqref{Malfa} and \eqref{mealfa}, we have
$$z_{tt}(0)=\frac{1}{z(0)}\left(\frac{V_{tt}(0)}{2}-(z_t(0))^2\right)<\frac{1}{z(0)}\left(\frac{\alpha_m}{2}-\frac{\alpha_{m}}{2}\right)=0.$$
Suppose that $z_{tt}(\tilde{t})\geq 0$ for some $\tilde{t}$ belonging to $[0,T_+(u))$. Then, as  $z_{tt}$ is continuous on $[0,T_+(u))$, by the intermediate value theorem there exists $t_0\in (0,T_+(u))$ such that
$$\forall t \in[0,t_0),\,\,z_{tt}(0)<0\,\mbox{ and } z_{tt}(t_0)=0.$$
Thus for \eqref{Malfa} and \eqref{ztmezero}
\begin{align}
\forall t\in (0,t_0],\,\,z_{t}(t)<z_t(0)\leq -\sqrt{4\varphi(\alpha_m)}.
\end{align}
We have, thus,
\begin{align}
\forall t\in (0,t_0],\,\,z_{t}^{2}(t)> 4\varphi(\alpha_m).
\end{align}
Using the inequality above and \eqref{ztmefi},
\begin{align}
\forall t\in (0,t_0],\,\,4\varphi(V_{tt}(t))\geq z_{t}^{2}(t)>4\varphi(\alpha_m).
\end{align}
Therefore, $V_{tt}(t)\neq\alpha_m$ for $t\in (0,t_0]$. Since $V_{tt}(0)<\alpha_m$  and by the continuity of $V_{tt}$,
\begin{align}\label{Vttmealfa}
\forall t\in [0,t_0],\,\,V_{tt}(t)<\alpha_m.
\end{align}
Since $V_{tt}(t)\neq\alpha_m$ and by \eqref{Vttmealfa}, we get
$$z_{tt}(t_0)=\frac{1}{z(t_0)}\left(\frac{V_{tt}(t_0)}{2}-z_t^{2}(t_0)\right)<\frac{1}{z(t_0)}\left(\frac{\alpha_m}{2}-\frac{\alpha_m}{2}\right),$$
contradicting the definition of $t_0$. Therefore,
\begin{align}\label{zttmezero}
z_{tt}<0\mbox{ for all }t\in [0,T_+(u)).
\end{align}
By contradiction, suppose that $T_+(u)=+\infty$. From \eqref{ztmezero} and  \eqref{zttmezero},
$$\forall t>0,\,\, z_t(t)<z_t(0)\leq 0,$$
a contradiction with nonnegativity of $z(t)$.

We now prove case (ii) of Theorem \ref{teo1}. We assume, besides the conditions \eqref{cond1} and \eqref{ME}, that \eqref{conds1} and \eqref{conds2} hold. That implies, in the same way as we did in case (i),\begin{equation}\label{ztMzero}
z_t(0)\geq 0
\end{equation}
\begin{equation}\label{vttMalfa}
V_{tt}(0)>\alpha_m.
\end{equation}
We affirm that there is $t_0\geq 0$ such that
\begin{align}\label{ztt0}
z_t(t_0)>2\sqrt{\varphi(\alpha_m)}.
\end{align}
Indeed, by \eqref{Malfa} and \eqref{ztMzero},
\begin{equation}\label{ztM2fi}
z_t(0)\geq 2\sqrt{\varphi(\alpha_m)}.
\end{equation}
If $z_t(0)> 2\sqrt{\varphi(\alpha_m)}$, then choose $t_0=0$ and we have the result. If not,
$$z_{tt}(0)=\frac{1}{z(0)}\left(\frac{V_{tt}(0)}{2}-z_t^{2}(0)\right)>\frac{1}{z(0)}\left(\frac{\alpha_m}{2}-\frac{\alpha_m}{2}\right)=0,$$
by \eqref{vttMalfa} and \eqref{ztM2fi}. Hence, there is a small $t_0>0$  satisfying \eqref{ztt0}.

Let $\varepsilon_0$ be a positive small number and assume
\begin{equation}\label{ztesp01}
z_t(t_0)\geq 2\sqrt{\varphi(\alpha_m)}+2\varepsilon_0.
\end{equation}
We will show that, for all $t\leq t_0$
\begin{equation}\label{zteps0}
z_t(t)>2\sqrt{\varphi(\alpha_m)}+\varepsilon_0.
\end{equation}
Suppose \eqref{zteps0} is false, and define
\begin{equation}
t_1=\inf\{t\geq t_0;\, z_t(t)\leq 2\sqrt{\varphi(\alpha_m)}+\varepsilon_0\}.
\end{equation}
By \eqref{ztesp01} $t_1>t_0$. By continuity of $z_t$,
\begin{equation}\label{ztt1}
z_t(t_1)=2\sqrt{\varphi(\alpha_m)}+\varepsilon_0
\end{equation}
and
\begin{equation}\label{ztMeps}
\forall \in [t_0,t_1],\,\,\,\,z_t(t)\geq 2\sqrt{\varphi(\alpha_m)}+\varepsilon_0.
\end{equation}
In view of \eqref{ztmefi},
\begin{equation}\label{zt2}
\forall t\in [t_0,t_1],\,\, (2\sqrt{\varphi(\alpha_m)}+\varepsilon_0)^2\leq z_t^{2}(t)\leq 4\varphi(V_{tt}(t)).
\end{equation}
Hence, $\varphi(V_{tt}(t))>\varphi(\alpha_m)$ for all $t\in [t_0,t_1]$, so, $V_{tt}(t)\neq \alpha_m$ and by continuity $V_{tt}(t)>\alpha_m$ for $t\in[t_0,t_1]$.
Using the Taylor expansion of $\varphi$ around $\alpha=\alpha_m$, there exists $a>0$ such that, if $|\alpha-\alpha_m|\leq 1$, then
\begin{equation}\label{taylor}
\varphi(\alpha)\leq \varphi(\alpha_m)+a(\alpha-\alpha_m)^2.
\end{equation}
We show that there exists a universal constant  $D>0$ such that
\begin{equation}\label{vttepsi}
\forall\,t\in[t_0,t_1]\,\, V_{tt}(t)\geq \alpha_m +\frac{\sqrt{\varepsilon_0}}{D}.
\end{equation}
Consider two cases:
\item[a)] If $V_{tt}(t)\geq \alpha_m+1$, then for $D>0$ large,  we get \eqref{vttepsi}
\item[b)] If $\alpha_m<V_{tt}(t)\leq \alpha_m+1$, then by \eqref{zt2} and \eqref{taylor}, we obtain
$$(2\sqrt{\varphi(\alpha_m)}+\varepsilon_0)^2\leq (z_t(t))^2\leq 4\varphi(V_{tt}(t))\leq 4\varphi(\alpha_m)+4a(V_{tt}(t)-\alpha_m)^2.$$
Thus,
$$4\sqrt{\varphi(\alpha_m)}\varepsilon_0<4\sqrt{\varphi(\alpha_m)}\varepsilon_0+\varepsilon_0^2\leq 4a(V_{tt}-\alpha_m)^2,$$
and choosing $D=\sqrt a(\varphi(\alpha_m))^{-\frac{1}{4}}$, \eqref{vttepsi} holds.

Furthermore,  by \eqref{eqztt} and \eqref{ztMeps}
\begin{align*}
z_{tt}(t_1)&=\frac{1}{z(t_1)}\left(\frac{V_{tt}(t_1)}{2}-z_{t}^{2}(t_1)\right)\\
&\geq \frac{1}{z(t_1)}\left(\frac{\alpha_m}{2}+\frac{\sqrt \varepsilon_0}{2D}-(2\sqrt{\varphi(\alpha_m)}+\varepsilon_0)^2\right)\\
&\geq \frac{1}{z(t_1)}\left(\frac{\sqrt{\varepsilon_0}}{2D}-4\varepsilon\sqrt{\varphi(\alpha_m)}-\varepsilon_0^2\right)>0,
\end{align*}
if $\varepsilon_0$ is small enough. That is, $z_t$ is increasing close to $t_1$, contradicting \eqref{ztt1} and\eqref{ztMeps}. This shows \eqref{zteps0}.
Note that we have also shown that the inequality \eqref{vttepsi} holds for all $t\in [t_0,T_+(u))$. Hence, by \eqref{enervirip}, \eqref{EQ} and \eqref{charac}
\begin{align*}
M[u_0]^{1-s_c}\left(\int|x|^{-b}|u(t)|^{p+1}\, dx\right)^{s_c}
&=M[u_0]^{1-s_c}\left[\frac{p+1}{2A}(16E[u_0]-V_{tt}(t))\right]^{s_c}\\
&\leq M[u_0]^{1-s_c}\left[\frac{p+1}{2A}\left(16E[u_0]-\alpha_m-\frac{\sqrt{\varepsilon_0}}{D}\right)\right]^{s_c}\\
&<M[u_0]^{1-s_c}\left[\frac{p+1}{2A}\left(16E[u_0]-\alpha_m\right)\right]^{s_c}\\
&=M[u_0]^{1-s_c}\left[\frac{8(p+1)}{A}E[Q]\right]^{s_c}\\
&=M[Q]^{1-s_c}\left[\int|x|^{-b}|Q|^{p+1}\, dx\right]^{s_c}.
\end{align*}
\end{proof}

\subsection{Dichotomy for quadratic phase initial data}
We now prove Corollary \ref{cor_gamma}, except for the scattering statement, which will follow from the results in Section \ref{s_scat}.
\begin{proof}[Proof of Corollary \ref{cor_gamma}]
Let $v_0$ satisfy $\mathcal{ME}[v_0] < 1$, $\gamma \in \Real\backslash \{0\}$ and $u$ be the solution with initial data $u_0 = e^{i\gamma|x|^2}v_0$. We assume $$\mathcal{ME}[u_0] \geq 1$$ (otherwise the result follows from Theorem \ref{dic_guz}).

We will now show that $u_0$ satisfies the assumption of Theorem \ref{teo1}. We need to calculate
\begin{equation}\label{e_u0}
E[u_0] = E[v_0] + 2 \gamma \mbox{ Im}\int x \cdot \nabla v_0 \bar{v}_0\, dx + 2 \gamma^2 \int |x|^2|v_0|^2\,dx
\end{equation}
and
\begin{equation}
\mbox{ Im}\int \bar{u}_0\,x\cdot \nabla u_0\,dx = \mbox{ Im}\int\bar{v}_0\, x \cdot \nabla v_0\,dx + 2 \gamma \int|x|^2|v_0|^2\,dx.
\end{equation}
Rewriting the above equations,
\begin{equation}\label{EuEv}
E[u_0] - \frac{\left(\mbox{Im}\displaystyle\int \bar{u}_0\,x\cdot \nabla u_0\,dx\right)^2}{2\displaystyle\int |x|^2|u_0|^2\,dx} =
E[v_0] - \frac{\left(\mbox{ \\ Im}\displaystyle\int \bar{v}_0\,x\cdot \nabla v_0\,dx\right)^2}{2\displaystyle\int |x|^2|v_0|^2\,dx} \leq E[v_0],
\end{equation}
or,
\begin{equation}\label{me_u0}
 \mathcal{ME}[u_0]\left[1-\frac{\left(\mbox{Im}\displaystyle\int \bar{u}_0x\cdot \nabla u_0\right)^2}{2E[u_0]\int |x|^2|u_0|^2}\right] = \mathcal{ME}[v_0] \leq 1.
\end{equation}
Therefore, the assumption \eqref{cond1} follows from \eqref{dvar} and \eqref{me_u0}.

We will assume here $\gamma > 0$ and $\mathcal{MP}[v_0] < 1$, as the proof of the other case is very similar. First note that, since $\mathcal{ME}[v_0] < 1$ and $\int|x|^2|v_0|^2 > 0$, there is only one positive solution of
\begin{equation}\label{poly_v0}
M[v_0]^\frac{1-s_c}{s_c}\left(E[v_0] + 2 \gamma \mbox{ Im}\int x \cdot \nabla v_0 \bar{v}_0\,dx + 2 \gamma^2 \int |x|^2|v_0|^2\,dx\right)=M[Q]^\frac{1-s_c}{s_c} E[Q].
\end{equation}
Now, since $\mathcal{ME}[u_0] \geq 1$ and $\gamma >0$, \eqref{e_u0}, we have $\gamma \geq \gamma_c^+$, where $\gamma_c^+$ is the positive solution of \eqref{poly_v0}. Rewriting \eqref{poly_v0}, we have
\begin{equation}
 \gamma_c^+ \mbox{ Im}\int x \cdot \nabla v_0 \bar{v}_0 \, dx +  (\gamma_c^+)^2 \int |x|^2|v_0|^2\, dx=\frac{M[Q]^\frac{1-s_c}{s_c}E[Q]-M[v_0]^\frac{1-s_c}{s_c}E[v_0] }{2M[v_0]^\frac{1-s_c}{s_c}} > 0,
\end{equation}
which implies
\begin{equation}
\mbox{ Im}\int x \cdot \nabla v_0 \bar{v}_0 \, dx +  \gamma_c^+\int |x|^2|v_0|^2 \, dx> 0.
\end{equation}
Using that $\gamma \geq \gamma_c^+$, we see that
\begin{equation}
\mbox{ Im}\int x \cdot \nabla u_0 \bar{u}_0 \, dx = \mbox{ Im}\int x \cdot \nabla v_0 \bar{v}_0 \, dx +  \gamma \int |x|^2|v_0|^2 \, dx> 0,
\end{equation}
which yields \eqref{conds2}. Since Theorem \ref{teo1} applies, we conclude the proof.
\end{proof}

We next prove prove Corollary \ref{quad_Q}, except for the scattering statement.
\begin{proof}[Proof of Corollary \ref{quad_Q}]
Given that $\bar{u}(x,-t)$ is a solution of \eqref{PVI} if $u(x,t)$ is a solution, we can assume $\gamma > 0$. We only need to prove that $$\mbox{ Im}\displaystyle\int x\cdot \nabla Q^\gamma(t_0)\overline{Q^\gamma}(t_0)\,dx \geq 0,$$ $$\mathcal{MP}[Q^\gamma(t_0)] < 1$$ and
$$ME[Q^{\gamma}(t_0)]\left(1-\frac{(V_t(t_{0}))^2}{32E[Q^\gamma(t_0)]V(t_{0})}\right)\leq 1,$$ for some $t_0>0$, where $V(t) = \displaystyle\int|x|^2|Q^\gamma(x,t)|^2\,dx$. First note that, for $Q^\gamma_0=e^{i\gamma|x|^2}Q$, we have
\begin{equation}
\nabla Q^\gamma_0=(2i\gamma x Q+\nabla Q)e^{i\gamma|x|^2}, \text{ and}
\end{equation}
\begin{equation}\label{AQg}
\triangle Q^{\gamma}_0=e^{i\gamma|x|^2}(2iN\gamma Q+4i\gamma x\cdot\nabla Q-4\gamma^2|x|^2Q+\triangle Q).
\end{equation}
Thus,
\begin{align}\label{Vt>0}
\mbox{ Im}\int x\cdot \nabla Q_0^{\gamma}Q_0^\gamma\,dx&=\mbox{ Im}\int x\cdot (2i\gamma x Q+\nabla Q)e^{i\gamma|x|^2}e^{-i\gamma|x|^2}Q\, dx\\&=\mbox{ Im}\int x\cdot(2i\gamma x Q+\nabla Q)Q\,dx\nonumber\\
&=2\gamma\int|x|^2Q^2\,dx>0.
\end{align}
which shows $\mbox{Im}\displaystyle\int x\cdot \nabla Q^\gamma(t_0)\overline{Q^\gamma}(t_0)\,dx >0$ for sufficiently small $t_0$.
Moreover, using the fact that $Q^{\gamma}$ is a solution to $\eqref{PVI}$, we have
\begin{align}
\frac{d}{dt}\int|x|^{-b}|Q^{\gamma}|^{p+1}\,dx&=(p+1)\mbox{ Re}\int |x|^{-b}(\partial_tQ^{\gamma}\overline{Q^{\gamma}})|Q^{\gamma}|^{p-1}\,dx\nonumber\\
&=(p+1)\mbox{ Re}\int |x|^{-b}(i\triangle Q^{\gamma}\overline{Q^{\gamma}})|Q^{\gamma}|^{p-1}\,dx\nonumber\\
&=-(p+1)\mbox{ Im}\int |x|^{-b}|Q^{\gamma}|^{p-1}\triangle Q^\gamma \overline{Q^\gamma}\,dx.
\end{align}
Consequently, from \eqref{AQg},
\begin{align}
\left[\frac{d}{dt}\int|x|^{-b}|Q^{\gamma}|^{p+1}\,dx\right]\Bigg|_{t=0}&=\left[-(p+1)\mbox{ Im}\int |x|^{-b}|Q^{\gamma}|^{p-1}\triangle Q^\gamma \overline{Q^\gamma}\,dx\right]\Bigg|_{t=0}\nonumber\\
&=-(p+1)\mbox{ Im}\int |x|^{-b}|Q^{\gamma}_0|^{p-1}Q(2iN\gamma Q+4i\gamma x\cdot \nabla Q\\&\quad\quad\quad\quad\quad\quad\quad\quad\quad\quad\quad\quad\quad\,\,\,-4\gamma^2|x|^2Q+\triangle Q)\,dx\nonumber\\
&=-2N\gamma(p+1)\int |x|^{-b}Q^{p+1}\,dx-4\gamma(p+1)\int Q^px\cdot \nabla Q\,dx\nonumber\\
&=-2N\gamma(p-1)\int |x|^{-b}Q^{p+1}\,dx<0.
\end{align}
Since
\begin{equation}
M[Q^{\gamma}_0]^{\frac{1-s_c}{s_c}}\int |x|^{-b}|Q^{\gamma}_0|^{p+1}\,dx=M[Q]^{\frac{1-s_c}{s_c}}\int |x|^{-b}|Q|^{p+1}\,dx,
\end{equation}
we get, for sufficiently small $t_0$
\begin{equation}
\mathcal{MP}[Q^{\gamma}(t_0)]<1.
\end{equation}
Now, define the function $F$ as
\begin{equation}\label{F}
F(t)=M[Q^{\gamma}]^{\frac{1-s_c}{s_c}}\left[E[Q^\gamma]-\frac{\left(\text{ Im}\displaystyle\int x\cdot \nabla Q^\gamma(t)\overline{Q^{\gamma}}(t)\,dx\right)^2}{2\displaystyle\int |x|^2|Q^{\gamma}(t)|^2\,dx}\right]-M[Q]^{\frac{1-s_c}{s_c}}E[Q].
\end{equation}
In view of \eqref{EuEv}, with $v_0=Q$, we conclude $F(0)=0$. We just need to check that $F(t)\leq 0$ for small positive $t.$
Let$$V(t)=\int |x|^2|Q^{\gamma}(x,t)|^2\,dx,\,\,\,\,\,\,\quad z(t)=\sqrt{V(t)}.$$
We can rewrite \eqref{F} as
$$
F(t)=M[Q^{\gamma}]^{\frac{1-s_c}{s_c}}\left(E[Q^\gamma]-\frac{1}{8}(z_t(t)^2)\right)-M[Q]^{\frac{1-s_c}{s_c}}E[Q],
$$
and thus, 
$$
F_t(t)=-\frac{1}{4}M[Q^{\gamma}]^{\frac{1-s_c}{s_c}}z_t(t)z_{tt}(t).
$$
Using \eqref{dvar}, \eqref{d2var} and the fact that Gagliardo-Nirenberg inequality \eqref{GN1} is an equality for $f = Q = e^{-i\gamma|x|^2}Q^\gamma_0$, we conclude that $z_{tt}(0) = 0$. Therefore, 
\begin{align}
F_{tt}(0)&=-\frac{1}{4}M[Q^{\gamma}]^{\frac{1-s_c}{s_c}}\left(z_t(0)z_{ttt}(0)+(z_{tt}(0))^2\right),\\
&=-\frac{1}{4}M[Q^{\gamma}]^{\frac{1-s_c}{s_c}}z_t(0)z_{ttt}(0).
\end{align}
On the other hand,
\begin{equation}
V_{tt}=2(z_t)^2+2zz_{tt},\quad\quad V_{ttt}=6z_tz_{tt}+2zz_{ttt}.
\end{equation}
Thus, $V_{ttt}(0)=2z(0)z_{ttt}(0)$. Hence, $F_{tt}(0)$ and $-V_{ttt}(0)$ have the same sign, but from \eqref{Vt>0} $z_t(0)>0$. By \eqref{enervirip}, we get that this sign is the same as the one of
$$\left[\frac{d}{dt}\int |x|^{-b}|Q^\gamma|^{p+1}\,dx\right]\Bigg|_{t=0}= -\frac{(p+1)}{2A} V_{ttt}(0).$$
Therefore, $F_{tt}(0)<0$, which shows that $F(t)$ is negative for small $t> 0$. This completes the proof.
\end{proof}


%
%
\section{Scattering}\label{s_scat}
We now prove the scattering part of theorem \ref{teo1}. We start with a lemma:

\begin{lemma}\label{energylemma}
Let $0 < a < A < \left(\displaystyle\int |x|^{-b}|Q|^{p+1}\right)^{s_c}M[Q]^{1-s_c}$. Then, there exists $\epsilon_0 = \epsilon_0(a,A)$ such that for all $f \in H^1(\mathbb{R}^N)$ with
$$
a \leq \left(\int |x|^{-b}|f|^{p+1}\,dx\right)^{s_c}M[f]^{1-s_c} \leq A,
$$
one has
\begin{equation}\label{el1}
\int \left|\nabla f\right|^2\,dx-\frac{N(p-1)+2b}{2(p+1)}\int|x|^{-b}|f|^{p+1}\,dx \geq \epsilon_0 M[f]^{1-\frac{1}{s_c}}
\end{equation}
and
\begin{equation}\label{el2}
E[f] \geq \frac{\epsilon_0}{2}M[f]^{1-\frac{1}{s_c}}.
\end{equation}
\end{lemma}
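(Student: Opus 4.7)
The plan is to reformulate everything in the scale-invariant quantities $\mathcal{MK}[f]$ and $\mathcal{MP}[f]$ from \eqref{mk} and \eqref{mp}. Using Pohozaev's identity \eqref{pohozaev}, which reads $\|\nabla Q\|_{L^2}^2=\lambda\int|x|^{-b}|Q|^{p+1}\,dx$ with $\lambda:=\frac{N(p-1)+2b}{2(p+1)}$, a direct computation shows that \eqref{el1} is equivalent to
\begin{equation*}
\mathcal{MK}[f]-\mathcal{MP}[f]\geq c\,\epsilon_0
\end{equation*}
for a fixed constant $c>0$ depending only on $Q$, $N$, $p$, $b$ (the factor $M[f]^{1-1/s_c}$ in \eqref{el1} is exactly what is needed to trade $K(f),P(f)$ for the normalized $\mathcal{MK}[f],\mathcal{MP}[f]$). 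Similarly, the hypothesis $a\leq\bigl(\int|x|^{-b}|f|^{p+1}\bigr)^{s_c}M[f]^{1-s_c}\leq A$ is equivalent to $\mathcal{MP}[f]\in[\tilde a,\tilde A]$ for some $0<\tilde a\leq\tilde A<1$ depending only on $a,A$; the condition $A<P(Q)^{s_c}M[Q]^{1-s_c}$ is precisely what forces $\tilde A<1$.

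Next, I would invoke the sharp Gagliardo--Nirenberg inequality in its clean scale-invariant form, already derived in the proof of Proposition \ref{MPequiv}:
\begin{equation*}
\mathcal{MP}[f]^{1/\beta}\leq\mathcal{MK}[f],\qquad \beta:=\tfrac{N(p-1)+2b}{4}>1
\end{equation*}
(the last inequality being equivalent to $s_c>0$). Subtracting $\mathcal{MP}[f]$ from both sides gives
\begin{equation*}
\mathcal{MK}[f]-\mathcal{MP}[f]\geq\mathcal{MP}[f]^{1/\beta}-\mathcal{MP}[f]=:\tilde\psi(\mathcal{MP}[f]).
\end{equation*}
Since $1/\beta<1$, the auxiliary function $\tilde\psi(y)=y^{1/\beta}-y$ is strictly positive on $(0,1)$ and continuous on $[0,1]$; as $[\tilde a,\tilde A]$ is a compact subset of $(0,1)$, $\tilde\psi$ attains a strictly positive minimum there. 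Combined with the reduction above, this yields \eqref{el1} with $\epsilon_0>0$ depending only on $a$ and $A$.

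Finally, for \eqref{el2} I would write $2E[f]=\|\nabla f\|_{L^2}^2-\tfrac{2}{p+1}\int|x|^{-b}|f|^{p+1}\,dx$ and observe that $\lambda-\tfrac{2}{p+1}=\tfrac{2(p-1)s_c}{p+1}\geq 0$, so
\begin{equation*}
2E[f]\geq\|\nabla f\|_{L^2}^2-\lambda\int|x|^{-b}|f|^{p+1}\,dx\geq \epsilon_0\,M[f]^{1-\frac{1}{s_c}},
\end{equation*}
and \eqref{el2} follows after dividing by $2$. I do not anticipate any serious obstacle: the only conceptual point is the equivalence between the raw and normalized formulations, which is immediate once Pohozaev's identity is applied, after which the whole lemma collapses to the one-variable observation that $y^{1/\beta}-y$ is bounded below on compact subintervals of $(0,1)$.
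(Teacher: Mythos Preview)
Your proposal is correct and follows essentially the same route as the paper: both use the sharp Gagliardo--Nirenberg inequality to reduce \eqref{el1} to the positivity of a single-variable function (your $y^{1/\beta}-y$ on $(0,1)$ is simply the paper's $\frac{y^{1/\beta}}{c_Q}-\frac{N(p-1)+2b}{2(p+1)}y$ after rescaling $y$ by its zero $y^*$), and then derive \eqref{el2} from \eqref{el1} via $\lambda\geq\frac{2}{p+1}$. One harmless arithmetic slip: $\lambda-\frac{2}{p+1}=\frac{(p-1)s_c}{p+1}$, not $\frac{2(p-1)s_c}{p+1}$.
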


\begin{proof}
Recalling the sharp Gagliardo-Nirenberg inequality, we have:
\begin{align}
&M[f]^{\frac{1}{s_c}-1}\left[\int|\nabla f|^2\,dx-\frac{N(p-1)+2b}{2(p+1)}\int |x|^{-b}|f|^{p+1}\,dx\right]\nonumber\\
&\geq \frac{1}{c_Q}M[f]^{\frac{1}{s_c}-1-\kappa}\left(\int|x|^{-b}|f|^{p+1}\,dx\right)^{\frac{4}{N(p-1)+2b}} - M[f]^{\frac{1}{s_c}-1}\frac{N(p-1)+2b}{2(p+1)}\int |x|^{-b}|f|^{p+1}\,dx\nonumber\\
&= \frac{y^{\frac{4}{N(p-1)+2b}}}{c_Q}-\frac{N(p-1)+2b}{2(p+1)}y.
\end{align}
where $y=M[f]^{\frac{1}{s_c}-1}\displaystyle\int|x|^{-b}|f|^{p+1}\,dx$. The function $y \mapsto\frac{y^{\frac{4}{N(p-1)+2b}}}{c_Q}-\frac{N(p-1)+2b}{2(p+1)}y$
has only one zero $y^*$ on $(0,+\infty)$ and is positive on $(0,y^*)$. Since the inequality (\ref{energylemma}) is an equality when $f = Q$, $y^*$  is exactly $M[Q]^{\frac{1}{s_c}-1}\displaystyle\int|x|^{-b}|Q|^{p+1}\,dx$, and (\ref{el1}) follows. Noting that
$$
E[f] \geq \frac{1}{2}\left(\int|\nabla f|^2\,dx-\frac{N(p-1)+2b}{2(p+1)}\int|x|^{-b}|f|^{p+1}\,dx\right),
$$
we get (\ref{el2}), because $\frac{N(p-1)+2b}{4} \geq 1$.
\end{proof}
\begin{definicao}\label{Hs_adm} If $N \geq 1$ and $s \in (0,1)$, the pair $(q,r)$ is called $\dot{H}^s$\textit{-admissible} if it satisfies the condition
\begin{equation}\label{hs_adm_eq}
\frac{2}{q} = \frac{N}{2}-\frac{N}{r}-s,    
\end{equation}
where
$$
2 \leq q,r \leq \infty, \text{ and } (q,r,N) \neq (2,\infty,2).
$$
\end{definicao}
Also, considering the following closed subset of $H^s$-admissible pairs 
\begin{equation}
    \mathcal{A}_s = \left\{(q,r)\text{ is } \dot{H}^{s}\text{-admissible} \left|\,
    \begin{cases} 
    \left(\frac{2N}{N-2s}\right)^+ \leq r \leq \left(\frac{2N}{N-2}\right)^-
    , & N \geq 3\\
    \left(\frac{2}{1-s}\right)^+ \leq r \leq \left(\left(\frac{2}{1-s}\right)^+\right)', & N = 2\\ 
    \frac{2}{1-2s} \leq r \leq \infty, &N = 1
    \end{cases}   
    \right.
    \right\}
\end{equation}

where $a^+ = a + \epsilon$, for a fixed, small $\epsilon >0$ and $(a^+)'$ is defined as the number such that
$$
\frac{1}{a} = \frac{1}{a^+} + \frac{1}{(a^+)'},
$$
we define the scattering norm
\begin{equation}
    \|u\|_{S(\dot{H}^{s_c})} = \sup_{(q,r) \in \mathcal{A}_{s_c}}\|u\|_{L^q_t L^r_x}.
\end{equation}

It is already known that scattering follows from the uniform boundedness of the $H^1$ norm and the finiteness of the $S(\dot{H}^{s_c})$ norm (see \citet[Proposition 1.4]{FG_scat}).

\begin{prop}
Define $S(L,A)$ as the supremum of $\|u\|_{S(\dot{H}^{s_c})}$ such that $u$  is a radial solution to (\ref{PVI}) on $[0, +\infty)$ with
\begin{equation}\label{meL}
\mathcal{ME}[u_0] \leq L
\end{equation}
and \begin{equation}\label{mpA}
\sup_{t \in[0,+\infty)} \left(\int |x|^{-b}|u(t)|^{p+1}\,dx\right)^{s_c}M[u]^{1-s_c} \leq A.
\end{equation}

If $A < \left(\displaystyle\int|x|^{-b}Q^{p+1}\,dx\right)^{s_c}M[Q]^{1-s_c}$, then $S(L,A) < +\infty$.
\end{prop}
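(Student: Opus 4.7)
The plan is to run the Kenig-Merle concentration-compactness/rigidity scheme in the form already developed for the INLS by \citet{FG_scat}, adapting only the choice of the critical threshold to the present situation, where the control is on the \emph{mass-potential-energy} along the flow rather than on the mass-energy.

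First I would verify a uniform $H^1$ bound for every solution counted in $S(L,A)$: by Lemma \ref{energylemma} and the hypothesis $A<\left(\int|x|^{-b}Q^{p+1}\right)^{s_c}M[Q]^{1-s_c}$, the quantity $\|\nabla u(t)\|_{L^2}^2 - \frac{N(p-1)+2b}{2(p+1)}\int |x|^{-b}|u(t)|^{p+1}\,dx$ is bounded below by a positive multiple of $M[u]^{1-1/s_c}$; combining this with conservation of energy and \eqref{meL} yields $\sup_t\|u(t)\|_{H^1}\leq C(L,A)$. Together with the small-data scattering theory and Strichartz estimates already used for the INLS, this shows that $S(L,A)<\infty$ for every sufficiently small $L$, so the threshold $L_c(A) := \sup\{L : S(L,A) < \infty\}$ is strictly positive.

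Second, I would argue by contradiction: assume $L_c(A)$ is finite and extract a sequence of radial initial data $u_{n,0}$ with $\mathcal{ME}[u_{n,0}] \nearrow L_c(A)$, \eqref{mpA} preserved along the flow, and $\|u_n\|_{S(\dot{H}^{s_c})}\to\infty$. Applying the linear radial profile decomposition to $u_{n,0}$, the Pythagorean decomposition of mass and energy (together with weak lower semicontinuity of the potential-energy, which keeps each profile in the region covered by the induction hypothesis) forces exactly one profile to be non-trivial: otherwise every nonlinear profile scatters with bounded $S(\dot{H}^{s_c})$ norm and the long-time perturbation lemma of \citet{FG_scat} would force $\|u_n\|_{S(\dot{H}^{s_c})}$ to be uniformly bounded, contradicting the choice of the sequence. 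The surviving single profile generates a critical element $u_c$: a radial $H^1$ solution with $\mathcal{ME}[u_c]=L_c(A)$, $\sup_t \mathcal{MP}[u_c(t)]\leq A$, non-scattering forward in time, and with precompact forward orbit in $H^1(\Real^N)$. Radiality removes the translation parameter, while the uniform $H^1$ bound above together with Lemma \ref{energylemma} rules out a vanishing or exploding scaling parameter.

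The final step, and the main obstacle, is the rigidity argument. Here I would use a smooth radial cut-off $\varphi_R$ and the localized virial identity of Morawetz type for the INLS, combined with the coercivity from Lemma \ref{energylemma} (applicable because $\mathcal{MP}[u_c(t)]\leq A$ stays below the threshold uniformly in $t$), to obtain
\[
\frac{d^2}{dt^2}\int \varphi_R(x)|u_c(x,t)|^2\,dx \geq c(L,A) > 0
\]
modulo an $R$-dependent remainder that is $o_R(1)$ uniformly in $t$, thanks to the precompactness of the orbit. Integrating over a long time interval contradicts the uniform bound on $\int \varphi_R|u_c|^2$, closing the argument. The core technical difficulty lies in controlling the error terms produced by the $|x|^{-b}$ weight in the localized virial identity at large radii; it is precisely this step that requires the restriction $b<\min\{N/3,1\}$ that appears in the scattering hypothesis of Theorem \ref{teo1}.
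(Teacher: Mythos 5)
Your overall Kenig--Merle architecture (positivity of the threshold via Lemma \ref{energylemma} and the small-data theory, radial profile decomposition, extraction of a critical element, rigidity through a localized virial estimate) is indeed the skeleton of the paper's proof, but there is a genuine gap at the step where you claim that ``weak lower semicontinuity of the potential-energy \dots keeps each profile in the region covered by the induction hypothesis.'' The constraint \eqref{mpA} is a condition on the whole forward trajectory, not on the datum at $t=0$, and lower semicontinuity only controls the potential energy of $\tilde{u}^j(-t_n^j)$ at a single time. More importantly, since the profiles may have mass-energy above the threshold (that is the whole point of this proposition), there is no a priori reason that their nonlinear evolutions are even globally defined: they could blow up in finite time, and then neither the inductive hypothesis (which applies only to global solutions obeying \eqref{meL} and \eqref{mpA}) nor the perturbation lemma can be invoked to conclude that ``every nonlinear profile scatters.'' This is precisely the new difficulty, relative to \citet{FG_scat}, that the paper isolates: one first proves a Pythagorean expansion of the kinetic energy \emph{along the INLS flow} (a consequence of the long-time perturbation theory of \citet{FG_scat}), valid up to any time $T$ up to which the $u_n$ exist with uniformly bounded gradient; combining this with \eqref{meL} and \eqref{mpA} for $u_n$, one tracks $\bigl\|\nabla\,\mathrm{INLS}(t)\tilde{u}^j(-t_n^j)\bigr\|_{L^2}$, concludes that each nonlinear profile is global, and only then transfers the bounds $\mathcal{ME}[v^j]\le L_c$ and $\sup_{t}\bigl(\int|x|^{-b}|v^j(t)|^{p+1}\bigr)^{s_c}M[v^j]^{1-s_c}\le A$ to the profiles. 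Without this flow-level decomposition the reduction to a single critical element does not close.

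A smaller inaccuracy: the restriction $b<\min\left\{\frac{N}{3},1\right\}$ is not produced by the error terms of the localized virial identity at large radii; in the paper it is inherited from the long-time perturbation result of \citet{FG_scat}, the same tool needed for the flow-level Pythagorean expansion above. The rigidity step itself (localized virial together with the coercivity \eqref{el1} of Lemma \ref{energylemma}, applicable because the critical element satisfies the potential-energy bound for all times) is as you describe and follows \citet{DR_Going} and \citet{FG_scat}.
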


\begin{proof}
The proof goes along the spirit of \citet{DR_Going}, \citet{FG_scat} and (see also \citet{Guevara}). We will give an outline of the proof, highlighting the main differences.

First we note that, if $L > 0$ is small enough (i.e., $L^{s_c} < E[Q]^{s_c}M[Q]^{1-s_c})$, then $S(L,A) < +\infty$. Assume, by contradiction, that $S(L,A) = +\infty $ for some $L \in \mathbb{R}$. Note that, if $u \not\equiv 0$ satisfies \eqref{mpA}, with $ A < \left(\displaystyle\int|x|^{-b}Q^{p+1}\,dx\right)^{s_c}M[Q]^{1-s_c}$, then by Lemma \ref{energylemma}, $E[u] >0$. Thus, the quantity $L_c$ given by
$$
L_c = L_c(A) := \inf\left\{L \in \mathbb{R} \text{ s.t. } S(L,A) = +\infty\right\}
$$
is well-defined and positive.

Moreover, there exists a sequence $\{u_n\}$ of (global) radial solutions such that $$M[u_n] = 1,$$ $$\left\|u_n\right\|_{S\left(\dot{H}^{s_c}\right)} \rightarrow +\infty,$$
$$
E[u_n] \searrow L_c,
$$
and
$$
\sup_{t \in [0, +\infty)}\int|x|^{-b}|u|^{p+1} \, dx \leq A.
$$

Therefore, using the radial linear profile decomposition (\citet[Proposition 5.1]{FG_scat})  for the initial conditions $u_{n,0}$ (note that $\{u_{n,0}\}$ is bounded in $H^1(\Real^N)$) and the existence of wave operators for large times (see \citet{FG_scat} and \citet{Guevara}), we obtain, for each $M \in \mathbb{N} $ (passing, if necessary, to a subsequence) a nonlinear profile decomposition of the form:
\begin{equation}\label{nlpd}
u_{n,0} = \sum_{j=1}^M \tilde{u}^j\left(-t_n^j\right)+\tilde{W}_n^M,
\end{equation}

where, for each $j$, $\tilde{u}^j$ is a solution to (\ref{PVI}) and:

\begin{enumerate}
\item for $k \neq j$, $|t_n^k - t_n^j| \rightarrow +\infty$;
\item for each $j$, there exists $T_j > 0$ such that, if $t_n^j \rightarrow +\infty$, then $\tilde{u}^j$ is defined on $(-\infty,-T_j]$, and if $t_n^j \rightarrow -\infty$, then $\tilde{u}^j$ is defined on $[T_j,+\infty)$;
\item for each $j$, there exists $v^j \in H^1$ such that $\|\tilde{u}^j\left(-t_n^j\right) - e^{-it_n^j \Delta}v^j\|_{H^1} \rightarrow 0$;
\item $\displaystyle\lim_{M \rightarrow +\infty} \left[\lim_{n\rightarrow +\infty}\left\|e^{it\Delta}\tilde{W}_n^M\right\|_{S\left(\dot{H}^{s_c}\right)}\right] = 0$;
\item for fixed $M \in \mathbb{N}$ and any $0\leq s \leq 1$, the asymptotic Pythagorean expansion:
$$
\left\|u_{n,0}\right\|^2_{\dot{H}^s} = \sum_{j = 1}^M \left\|\tilde{u}^j\left(-t_n^j\right)\right\|^2_{\dot{H}^s}+\left\|\tilde{W}_n^j\right\|^2_{\dot{H}^s}+o_n(1)
$$
and the energy Pythagorean decomposition:
$$
E[u_{n,0}] = \sum_{j = 1}^M E\left[\tilde{u}^j\right]+E\left[\tilde{W}_n^j\right]+o_n(1).
$$
\end{enumerate}
We denote the solution to \eqref{PVI} in time $t$, with initial data $\psi$ by $\text{INLS}(t)\psi$. Note that, unlike in \citet{FG_scat}, we do not know whether the nonlinear profiles evolve into global solutions, because the quantity $E[\tilde{u}^j]^{s_c}M[\tilde{u}^j]^{1-s_c}$ may not be small. Thus, in order to prove that $\text{INLS}(t)\tilde{u}^j(-t_n^j)$ exists on $[0, +\infty)$, we need to track $\left\|\nabla \text{INLS}(t)\tilde{u}^j(-t_n^j)\right\|_{L^2}$.

Using long-time perturbation theory (\citet[Proposition 4.14]{FG_scat}), the asymptotic orthogonality at $t = 0$  can be extended to the \textit{INLS} flow.

\begin{lemma}(Pythagorean decomposition along the bounded INLS
flow).
Suppose $u_{n,0}$ is a radial bounded sequence in $H^1(\Real^N)$. Let $T \in (0, +\infty)$  be a fixed time. Assume that $u_n(t) = \mbox{INLS}(t)u_{n,0}$ exists up to time $T$ for all $n$; and $\displaystyle\lim_n \left\|\nabla u_n(t)\right\|_{L ^\infty_{[0,T]}L^2_x} < +\infty$.  Consider the nonlinear profile decomposition (\ref{nlpd}) and denote $W_n^M(t) = \text{INLS}(t)W^m_n$. Then for all $j$, the nonlinear
profiles $\tilde{v}^j(t) =  \text{INLS}(t)\tilde{u}^j(-t_n^j)$ exist up to time T and for all $t \in [0,T]$,

$$
\left\|\nabla u_{n}(t)\right\|^2_{L^2} = \displaystyle\sum_{j = 1}^M \left\|\nabla \tilde{v}^j\left(t\right)\right\|^2_{L^2}+\left\|\tilde{W}_n^j(t)\right\|^2_{L^2}+o_n(1)
,$$

where $o_n(1) \rightarrow 0$ uniformly on $0 \leq t \leq T$.
\end{lemma}
Invoking (\ref{meL}) and (\ref{mpA}) and using this orthogonality along the INLS flow, one is able to prove that $v^j(t)$ is defined on $[0, +\infty)$ as well, and satisfies, for every $j$,
$$
M[v^j] \leq 1,
$$
\begin{equation}
\mathcal{ME}[v^j] \leq L_c
\end{equation}
and \begin{equation}
\sup_{t \in[0,+\infty)} \left(\int |x|^{-b}|v^j(t)|^{p+1}\,dx\right)^{s_c}M[v^j]^{1-s_c} \leq A.
\end{equation}

The rest of the proof follows the same lines as \citet{DR_Going} and \citet{FG_scat}, using the criticality of $L_c$ to show the existence of only one non-zero profile, say, $v^1(t)$, and letting $u_c(t) = v^1(t)$. This criticality also shows that $M[u_c] = 1$ and $\mathcal{ME}[u_c] = L_c$. Long-time perturbation theory yields $\left\|u_c\right\|_{S\left(\dot{H}^{s_c}\right)}= +\infty$. At this point, the classical compactness lemma follows.
\begin{lemma}[Compactness]\label{compactness}
Assume that there exists $L_0 \in \mathbb{R}$ and a positive number  
$$A < \left(\displaystyle\int|x|^{-b}|Q|^{p+1}\,dx\right)^{s_c}M[Q]^{1-s_c}$$ such that $S(L_0,A) = +\infty$. Then there exists a radial global  solution $u_c$ of (\ref{PVI}) such that the set
$$
K = \displaystyle\left\{u_c(x,t), t \in [0, +\infty)\right\}
$$
has a compact closure in $H^1(\mathbb{R}^N)$.
\end{lemma}

Using this compactness lemma and the virial identity \eqref{d2var}, we also have the classic rigidity lemma.
\begin{lemma}[Rigidity]
There's no solution $u_c$ of (\ref{PVI}) satisfying the conclusion of Lemma \ref{compactness}.
\end{lemma}

The proof goes on the same lines as in \citet{DR_Going} and \citet{FG_scat}. We point here that the restriction $b < \min\left\{\frac{N}{3},1\right\}$ is technical and comes from the proof of long-time perturbation in \citet{FG_scat}.
\end{proof}

\section{Proof of the blowup criteria}\label{s_blowup_criteria}

In this section we prove two criteria for blow up in finite time. The first one is a generalization
of Lushnikov's criterion in \cite{lushnikov} and of Holmer-Platte-Roudenko criteria in \cite{HPR} for the INLS, and the second one is the modification of the first approach, where the
generalized uncertainty principle is replaced by the interpolation inequality \eqref{interp}. The two criteria are the INLS versions of the criteria proved by Duyckaerts and Roudenko in \cite{DR_Going}.

\begin{proof}[Proof of Theorem \ref{lush1}]
Integrating by parts,
\begin{align*}
\|u\|^{2}_{L^2}&=\int |u|^2\,dx=\frac{1}{N}\sum^{N}_{j=1}\int \partial_jx_j|u|^2\,dx=-\frac{1}{N}\sum_{j=1}^{N}\int x_j\partial_j(|u|^2)\,dx\\&=-\frac{1}{N}\sum_{j=1}^{N}\int x_j(\partial_ju\overline{u}+u\partial_j\overline{u})\,dx=-\frac{2}{N}\sum_{j=1}^{N}\text{Re}\,\int x_j\partial_ju\overline{u}\,dx\\&=-\frac{2}{N}\text{Re}\,\int (x\cdot\nabla u)\overline{u}\,dx.
\end{align*}
Since $|z|^2=|\text{Re}\,z|^2+|\text{Im}\,z|^2$, using Hölder's inequality
\begin{align}
\|xu\|^{2}_{L_2}\|\nabla u\|^{2}_{L^2}&\geq \left|\int(x\cdot\nabla u)\overline{u}\,dx\right|^2=\left|\text{Re}\,\int(x\cdot\nabla u)\overline{u}\,dx\right|^2+\left|\text{Im}\,\int(x\cdot\nabla u)\overline{u}\,dx\right|^2\nonumber\\&=\frac{N^2}{4}\|u\|^4_{L^2}+\left|\text{Im}\,\int(x\cdot\nabla u)\overline{u}\,dx\right|^2.
\end{align}
From the definition of variance and the identity for the first derivative of the variance \eqref{dvar}, we get the uncertainty principle
\begin{align}\label{prinuncer}
\frac{N^2}{4}\|u_0\|^2_{L^2}+\left|\frac{V_t}{4}\right|^2\leq V(t)\|\nabla u(t)\|^{2}_{L^2}.
\end{align}
Using the equation \eqref{d2var} for the second derivative of the variance, we obtain
\begin{align}\label{43}
V_{tt}(t)=4(N(p-1)+2b)E[u_0]-4(p-1)s_c\|\nabla u(t)\|^{2}_{L^2}.
\end{align}
 Substituting \eqref{43} in the uncertainty principle \eqref{prinuncer}, we have
\begin{align}\label{prinineq}
V_{tt}(t)\leq4(N(p-1)+2b)E[u_0]-N^2(p-1)s_c\frac{(M[u_0])^2}{V(t)}-\frac{(p-1)s_c}{4}\frac{|V_t(t)|^2}{V(t)}.
\end{align}
Now, we rewrite equation \eqref{prinineq} in order to cancel the term $V_t^2$. For this, define\begin{equation}\label{B}
V=B^{\frac{1}{\alpha+1}},\,\,\,\,\,\,\,\,\,\,\,\,\,\,\,\,\,\,\,\alpha=\frac{(p-1)s_c}{4}=\frac{N(p-1)-4+2b}{8}.
\end{equation}
Then,
\begin{align*}
V_t=\frac{1}{\alpha+1}B^{-\frac{\alpha}{\alpha+1}}\,\,\,\,\,\,\,\,\mbox{ and }\,\,\,\,\,\,\,\,V_{tt}=-\frac{\alpha}{(\alpha+1)^2}B^{-\frac{2\alpha+1}{\alpha+1}}B_t^2+\frac{1}{\alpha+1}B^{-\frac{\alpha}{\alpha+1}}B_{tt},
\end{align*}
which gives
$$
B_{tt}\leq 4(\alpha+1)N(p-1)E[u_0]B^{\frac{\alpha}{\alpha+1}}-(\alpha+1)N^2(p-1)s_c(M[u_0])^2B^{\frac{\alpha-1}{\alpha+1}}
$$
that is, for all $t\in [0,T_+(u)$
\begin{align}
B_{tt}\leq \frac{N(p-1)(N(p-1)+4+2b)}{2}\left(E[u_0]B^{\frac{N(p-1)-4+2b}{N(p-1)+4+2b}}-\frac{Ns_c}{4}(M[u_0])^{2}B^{\frac{N(p-1)-12+2b}{N(p-1)+4+2b}}\right).
\end{align}
In order to further simplify inequality, let us make a rescaling.
Define $B(t)=\mu \Phi(\lambda t)$, with
\begin{equation}\label{lamandmu}
\mu=\left(\frac{Ns_c(M[u_0])^2}{4E[u_0]}\right)^{\frac{N(p-1)+4+2b}{8}},\,\,\,\,\,\,\,\,\,\lambda=\frac{8\sqrt 2}{\sqrt{Ns_c}}\frac{E[u_0]}{M[u_0]}.
\end{equation}
Then letting $s=\lambda t$, we obtain
\begin{align}\label{eqmech}
\omega \Phi_{ss}\leq \Phi^{\gamma}-\Phi^{\delta}, \,\,\,s\in[0,T_+/a),
\end{align}
where
$$\gamma=\frac{N(p-1)-4+2b}{N(p-1)+4+2b},\,\,\,\,\,\,\,\,\,\,\,\,\,\,\,\,\,\delta=\frac{N(p-1)-12+2b}{N(p-1)+4+2b}=2\gamma-1,$$
$$ \omega=\frac{64}{N(p-1)(N(p-1)+4+2b)}$$
and since $p>1+\frac{4}{N}$,
\begin{align}
0<\gamma<1,\,\,\,\,\,-1<\delta<\gamma.
\end{align}
We rewrite \eqref{eqmech} as
\begin{equation}\label{eqmech2}
\omega \Phi_{ss}+\frac{\partial U}{\partial \Phi}\leq 0,
\end{equation}
for $t\in [0,T_+/a)$, where $U(\Phi)=\frac{\Phi^{\delta+1}}{\delta+1}-\frac{\Phi^{\gamma+1}}{\gamma+1}$. 
Define the energy of the particle
\begin{equation}
\mathcal{E}(s)=\frac{\omega}{2}\Phi_s^2(s)+U(\Phi(s))
\end{equation}
which is conserved for solutions of
\begin{equation}
\omega \Phi_{ss}+\frac{\partial U}{\partial \Phi}=0.
\end{equation}
Based on the ideas of Lushnikov \cite{lushnikov}, Duyckaerts and Roudenko \cite{DR_Going} studied this model and showed the following proposition
\begin{prop}\label{proplush}
Let $\Phi$ be a nonnegative solution of \eqref{eqmech2} such that one of the following holds:
\item[(\textbf{A})] $\mathcal{E}(0)<U_{max}\mbox{ and }\Phi(0)<1,$
\item[(\textbf{B})] $\mathcal{E}(0)>U_{max}\mbox{ and }\Phi_s(0)<0,$
\item[(\textbf{C})] $\mathcal{E}(0)=U_{max}, \Phi_s(0)<0\mbox{ and }\Phi(0)<1.$\\

Then $T_+<\infty$.
\end{prop}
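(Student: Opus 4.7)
The plan is to derive finite-time vanishing of $\Phi$ from the differential inequality \eqref{eqmech2}, and then translate this into blow-up of $V(t) = (\mu\Phi(\lambda t))^{1/(\alpha+1)}$ via the uncertainty principle \eqref{prinuncer}. The preliminary step is to record the shape of $U$: since $U'(\Phi) = \Phi^{\delta} - \Phi^{\gamma}$ with $\delta < \gamma$, one has $U(0) = 0$, $U$ strictly increases on $(0,1)$ up to the unique maximum $U_{max} = U(1)$, then strictly decreases to $-\infty$. In particular, $U'(\Phi) \geq 0$ for $\Phi \in [0,1]$. Differentiating the mechanical energy gives
\[
\mathcal{E}'(s) = \Phi_s(s)\bigl(\omega\Phi_{ss}(s) + U'(\Phi(s))\bigr),
\]
which, combined with \eqref{eqmech2}, yields the asymmetric monotonicity that drives the whole argument: $\mathcal{E}$ is non-increasing on intervals where $\Phi_s \geq 0$ and non-decreasing on intervals where $\Phi_s \leq 0$.

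For case (\textbf{B}), I would show that $\Phi_s$ stays strictly negative: otherwise, at a first zero $s_0$ of $\Phi_s$ one would have $\mathcal{E}(s_0) = U(\Phi(s_0)) \leq U_{max}$, but $\mathcal{E}$ is non-decreasing on $[0,s_0]$, giving $\mathcal{E}(s_0) \geq \mathcal{E}(0) > U_{max}$, a contradiction. The identity $\omega\Phi_s^2/2 = \mathcal{E} - U(\Phi)$ then forces $\Phi_s(s) \leq -\sqrt{2(\mathcal{E}(0)-U_{max})/\omega} < 0$, so $\Phi$ reaches $0$ in time at most $\Phi(0)\sqrt{\omega/(2(\mathcal{E}(0)-U_{max}))}$. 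Case (\textbf{C}) is the analogue with $\mathcal{E}(0) = U_{max}$: the same reasoning forbids $\Phi_s$ from vanishing (that would force $\Phi = 1$, impossible while $\Phi$ decreases from $\Phi(0) < 1$), so $\Phi_s \leq -\sqrt{2(U_{max}-U(\Phi))/\omega}$, and separation of variables yields a finite time because $U_{max} - U(\Phi)$ is bounded away from $0$ on $[0, \Phi(0)]$.

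Case (\textbf{A}) is the main obstacle, since the asymmetric monotonicity of $\mathcal{E}$ alone does not preclude $\Phi$ from oscillating below the barrier. I would first show $\Phi < 1$ throughout the maximal interval. Assume $s_1$ is the first time $\Phi(s_1) = 1$; on $[0,s_1]$ one has $\Phi \leq 1$, hence $U'(\Phi) \geq 0$ and \eqref{eqmech2} gives $\Phi_{ss} \leq 0$. A concave function with $\Phi(0) < 1$ that reaches $1$ at $s_1$ must be monotone increasing on $[0,s_1]$, so $\Phi_s \geq 0$ there, $\mathcal{E}$ is non-increasing on $[0,s_1]$, and therefore $U_{max} \leq \mathcal{E}(s_1) \leq \mathcal{E}(0) < U_{max}$, a contradiction. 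Hence $\Phi < 1$ globally and $\Phi$ is concave. Either $\Phi_s(0) \leq 0$ (and then $\Phi_{ss}(0) \leq -U'(\Phi(0))/\omega < 0$ drives $\Phi_s$ strictly negative, assuming the non-trivial case $\Phi(0) > 0$) or $\Phi_s(0) > 0$ and, by concavity, $\Phi_s$ becomes negative at the unique turning point. In either case we find $s_2$ with $\Phi_s(s_2) < 0$, and concavity gives $\Phi(s) \leq \Phi(s_2) + (s - s_2)\Phi_s(s_2)$, forcing $\Phi$ to hit $0$ at some finite $s^*$.

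In all three cases we obtain a finite $s^* > 0$ with $\Phi(s) \to 0$ as $s \to s^*$. Since $V(t) = (\mu\Phi(\lambda t))^{1/(\alpha+1)}$, this gives $V(t) \to 0$ as $t \to s^*/\lambda$; the uncertainty inequality \eqref{prinuncer} then forces $\|\nabla u(t)\|_{L^2} \to \infty$ at that time, so $T_+ \leq s^*/\lambda < \infty$, as claimed.
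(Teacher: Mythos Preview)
Your argument follows the same mechanical-energy strategy as the paper's, and cases (\textbf{B}) and (\textbf{C}) are handled essentially as in the original. In case (\textbf{A}), however, the step ``by concavity, $\Phi_s$ becomes negative at the unique turning point'' is not justified by concavity alone: the function $s\mapsto 1-e^{-s}$ is strictly concave, bounded above by $1$, and has $\Phi_s>0$ for all $s$. What actually forces $\Phi_s$ to become negative is a uniform bound $\Phi_{ss}\le -\varepsilon_1<0$ on the region where $\Phi_s\ge 0$. This follows from the energy monotonicity you already set up: while $\Phi_s\ge 0$ one has $U(\Phi(s))\le\mathcal{E}(s)\le\mathcal{E}(0)<U_{\max}$, so $\Phi$ stays in a compact subinterval of $(0,1)$ on which $U'$ is bounded below by a positive constant. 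The paper makes this quantitative step explicit (the $\varepsilon_0,\varepsilon_1$ argument); once you insert it, your proof is complete.

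A minor remark on the closing paragraph: the translation via the uncertainty principle is not needed. Since $\Phi=(\mu^{-1}V)^{\alpha+1}>0$ for any nontrivial solution, the argument that $\Phi$ must become non-positive by some finite $s^*$ is already a contradiction with $\lambda T_+>s^*$, yielding $T_+\le s^*/\lambda$ directly; this is how the paper concludes.
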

\begin{proof}
For the sake of completeness of this work, we will give the proof of the proposition. Multiplying equation \eqref{eqmech2} by $\Phi_s$, we get
\begin{equation}\label{energszero}
\Phi_s(s)>0\Rightarrow \mathcal{E}_s(s)<0,\,\,\,\,\,\,\Phi_s(s)<0\Rightarrow \mathcal{E}_s(s)>0.
\end{equation}
We argue by contradiction, assuming $T_+=T_+(u)=+\infty$.

We first assume $(A)$. Let us prove by contradiction that\begin{equation}
\exists\,s>0,\,\,\,\Phi_s(s)<0.
\end{equation}
If not, $\Phi_s(s)\leq 0$ for all $s,$ and \eqref{energszero} implies that the energy decays. By $(A)$, $\mathcal{E}(s)\leq \mathcal{E}(0)<U_{max}$ for all $s$. Thus, $|\Phi(s)-1|\geq \varepsilon_0$ (where $\varepsilon_0>0$ depends on $\mathcal{E}(0))$ for all $s$. Since by $(A)$ $\Phi(0)<1$, we obtain by continuity of $\Phi$ that $\Phi(s)\leq 1-\varepsilon_0$ for all $s$. By equation \eqref{eqmech}, we deduce $\Phi_{ss}\leq -\varepsilon_1$ for all s, where $\varepsilon_1>0$ depends on $\varepsilon_0.$ Thus, $\Phi$ is strictly concave, a contradiction with the fact that $\Phi$ is positive and $T_+=+\infty.$ 

We have proved that there exists $s> 0$ such that $\Phi_s(s)< 0$. Letting
$$t_1=\inf \{s> 0; \Phi_s(s)< 0\},$$
we get by \eqref{energszero} that the energy is nonincreasing on $[0,t_1]$. Thus, $\mathcal{E}(s)<\mathcal{E}(0)\leq U_{max}$ on $[0,t_1]$, which proves that $\Phi(s)\neq 1$ on $[0,t_1]$. Since $\Phi(0)<1,$ we deduce by the intermediate value theorem that $\Phi(t_1)<1$ and by \eqref{eqmech} that $\Phi_{ss}(t_1)<0$. Since $\Phi_s(t_1)\leq 0$, an elementary bootstrap argument, together with equation \eqref{eqmech} shows that $\Phi(s)\leq 1-\varepsilon_0,\,\Phi_s(s)<0$ and $\Phi_{ss}(s)\leq-\varepsilon_1$ for $s>t_1$, for some positive constants $\varepsilon_0, \varepsilon_1$. This is again a contradiction with the positivity of $\Phi$.

We next assume (B). Let $t_1$ be such that $\Phi_s(s)<0$ on $[0,t_1]$. By \eqref{energszero}, $\mathcal{E}$ is nondecreasing on $[0,t_1]$, and thus, $\mathcal{E}(s)\geq \mathcal{E}(0)>U_{max}$ for all $s$ on $[0,t_1]$. As a consequence, $\frac{1}{2}\Phi_s(s)^2\geq \mathcal{E}(0)-U_{max}>0$ for all $s$ in  $[0,t_1]$, which shows that the inequality $\Phi_s(s)\leq -\sqrt{\mathcal{E}(0)-U_{max}}$ holds on $[0,t_1]$. Finally, an elementary bootstrap argument shows that the inequality $\Phi_s(s)\leq -\sqrt{\mathcal{E}(0)-U_{max}}$ is valid for all $s\geq 0$, a contradiction with the positivity of $\Phi$.

Finally, we assume (C). By bootstrap again, $\Phi_s(s)<0$, $\Phi(s)<1$ and $\Phi_{ss}(s)<0$ for all positive $s$, proving again that $\Phi$ is a strictly concave function, a contradiction.
\end{proof}

Since
\begin{equation}
\alpha=\frac{(p-1)s_c}{4}=\frac{N(p-1)-4+2b}{8}, 
\end{equation}  
we have
\begin{align}
2\alpha+1=\frac{N(p-1)+2b}{4}, \quad
\alpha+1=\frac{N(p-1)+4+2b}{8},
\end{align}
\begin{align} 
(\alpha+1)(\delta+1)=2\alpha,\quad
(\alpha+1)(\gamma+1)=2\alpha+1 \text{ and }
\omega=\frac{2}{(2\alpha+1)(\alpha+1)}. \end{align}

By making $\Phi = v^{\alpha+1}$, then
\begin{align*}
\mathcal{E}=\frac{\omega}{2}\Phi_s^2(s)+U(\Phi(s))=\frac{\alpha+1}{2\alpha+1}(v')^2v^{2\alpha}+\frac{\alpha+1}{2\alpha}v^{2\alpha}-\frac{\alpha+1}{2\alpha+1}v^{2\alpha+1} \end{align*}
and
$$
U_{max}= \frac{1}{2\alpha}\frac{\alpha+1}{2\alpha+1}.
$$
Consider the function $f$ given for
\begin{align}\label{function}
f(x)=\sqrt{\frac{1}{kx^k}+x-\left(1+\frac{1}{k}\right)},
\end{align}
where $k=\frac{(p-1)s_c}{2}=2\alpha$. Hence, if $v_s(0)$ satisfies the condition
$$
v_s(0)<
\left\{
\begin{array}{ll}
+f(v(0)),\,\,\,&\mbox{if }v(0)<1,\\
-f(v(0)),\,\,\,&\mbox{if }v(0)\geq 1,
\end{array}
\right.
$$
then $\Phi=v^{\alpha+1}$ satisfies the conditions of Proposition \ref{proplush}. Indeed, the condition $\mathcal{E}<U_{max}$ is equivalent to
$$2\alpha(v')^2v^{2\alpha}+(2\alpha+1)v^{2\alpha}-2\alpha v^{2\alpha+1}<1$$
that is,
$$|v_s|<f(v).$$
Hence, the condition (A) means
$$v(0)<1\,\,\,\,\,\mbox{ and }\,\,\,\,\,-f(v(0))<v_s(0)<f(v(0))$$
and the condition (B) holds if and only if
$$|v_s(0)|>f(v(0))\,\,\,\,\,\mbox{ and }\,\,\,\,\,v_s(0)<0.$$
More precisely,
$$v_s(0)<-f(v(0))$$
and the condition (C) is equivalent to
$$v(0)<1\,\,\,\,\,\mbox{ and }\,\,\,\,\,v_s(0)=-f(v(0)).$$

Therefore, from \eqref{B}, \eqref{lamandmu} and from the definition of $v$, we have
\begin{align*}
V(0)&=(\mu \Phi(\lambda t))^{\frac{1}{\alpha+1}}\Bigg|_{t=0}=\mu^{\frac{8}{N(p-1)+4+2b}}v\left(\frac{8\sqrt 2}{\sqrt{Ns_c}}\frac{E[u_0]}{M[u_0]}t\right)\Bigg|_{t=0}\\
&=\mu^{\frac{8}{N(p-1)+4+2b}}v(0)=\frac{Ns_cM^2}{4E[u_0]}v(0)
\end{align*}
and
$$V_t(0)=\mu^{\frac{8}{N(p-1)+4+2b}}\frac{8\sqrt 2}{\sqrt{Ns_c}}\frac{E[u_0]}{M[u_0]}v_s(0)=\frac{Ns_cM^2}{4E[u_0]}\frac{8\sqrt 2}{\sqrt{Ns_c}}\frac{E[u_0]}{M[u_0]}v_s(0)=M[u_0]\sqrt{8Ns_c}v_s(0).$$
Furthermore,
\begin{align*}
\frac{V_t(0)}{M[u_0]}=\sqrt{8Ns_c}v_s(0)<\sqrt{8Ns_c}g(v(0))=\sqrt{8Ns_c}g\left(\frac{4}{Ns_c}\frac{V(0)E[u_0]}{M[u_0]^2}\right),
\end{align*}
which completes the proof of Theorem \ref{lush1}.
\end{proof}
We now proceed to the proof of Theorem \ref{lush2}. For that, we consider the following proposition.
\begin{prop}\label{proplush2} Let $p>1$ and $N\geq1$. Then, the following inequality
\begin{equation}\label{interp}
\|u\|^{2}_{L^2}\leq C_{p,N}\left(\|xu\|_{L^2}^{\frac{N(p-1)+2b}{2}}\||\cdot|^{\frac{-b}{p+1}}u\|^{p+1}_{L^{p+1}}\right)^{\frac{2}{N(p-1)+2(p+1)+2b}}
\end{equation}
holds with the sharp constant $C_{p,N}$ (depending on the nonlinearity $p$ and dimension $N$) given by \eqref{cnstlush}. Moreover, the equality occurs if and only if there exists $\beta\geq 0$, $\alpha\leq 0$ such that $|u(x)|=\beta\phi(\alpha x)$, where
{$$
\phi(x)=
\left\{
\begin{array}{ll}
|x|^{\frac{b}{p-1}}(1-|x|^2)^{\frac{1}{p-1}}&\mbox{ if }0\leq |x|<1,\\
0&\mbox{ if } |x|>1.
\end{array}
\right.
$$}
\end{prop}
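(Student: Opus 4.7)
The plan is to prove the sharp inequality by a variational argument, identifying the extremal explicitly as $\phi$. First I apply Schwarz (radially decreasing) symmetrization to reduce to $u\geq 0$ radial and non-increasing: the $L^2$ norm is invariant, while by the Hardy-Littlewood inequality $\int|x|^{-b}|u|^{p+1}\,dx$ increases (since $|x|^{-b}$ is radially decreasing) and $\int|x|^2|u|^2\,dx$ decreases (since $|x|^2$ is radially increasing), both changes improving the target ratio. Next I exploit the two scaling symmetries $u(x)\mapsto \beta u(\alpha x)$ to normalize a maximizing sequence by $\|xu_n\|_{L^2}=1$ and $\int|x|^{-b}|u_n|^{p+1}\,dx=1$, and maximize $\|u_n\|_{L^2}^2$ over this class.

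On the radial decreasing class, the first constraint controls tails at infinity while the second (together with local integrability of $|x|^{-b}$ for $b<N$) prevents vanishing at the origin. Standard radial compactness (a Strauss-type embedding adapted to the weighted $L^{p+1}$ space) then gives strong convergence to an extremal. Writing the Euler-Lagrange equation with Lagrange multipliers $\mu,\nu>0$,
\[
u - \mu|x|^2 u - \nu |x|^{-b}\,u^p = 0 \quad \text{where } u>0,
\]
and using the remaining two-parameter rescaling to set $\mu=\nu=1$, we reach the pointwise algebraic identity $u^{p-1} = |x|^b(1-|x|^2)$. Positivity of the right-hand side forces the support to be the unit ball, and there $u(x) = |x|^{b/(p-1)}(1-|x|^2)^{1/(p-1)} = \phi(x)$, giving the claimed form $|u(x)|=\beta\phi(\alpha x)$ of equality. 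The sharp constant $C_{p,N}$ is then obtained by substituting $\phi$ into the ratio, with the three integrals reducing to Beta functions via the change of variables $t=r^2$.

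The main obstacle is the existence step in the variational problem: with two scaling freedoms already fixed, one must verify strong convergence in the weighted $L^{p+1}$ norm on the radial decreasing class, which requires care both at the origin (due to the $|x|^{-b}$ weight) and at infinity. As a fallback for the non-sharp form of the inequality, a direct Hölder argument suffices: split $\int|u|^2 = \int_{|x|<R}+\int_{|x|\geq R}$, bound the first piece via Hölder with exponents $(p+1)/2$ and $(p+1)/(p-1)$ against $\int|x|^{-b}|u|^{p+1}$ times a power of $R$, bound the second by $R^{-2}\|xu\|_{L^2}^2$, and minimize over $R$; sharpness is then verified by direct evaluation at $\phi$.
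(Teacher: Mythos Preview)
Your symmetrization step is incorrect, and this breaks the variational argument. Under Schwarz rearrangement the $L^2$ norm is preserved and $\|xu\|_{L^2}$ decreases, but $\int|x|^{-b}|u|^{p+1}\,dx$ \emph{increases} (Hardy--Littlewood with the decreasing weight $|x|^{-b}$). Since this weighted norm sits in the \emph{denominator} of the functional you are trying to maximize, the two effects pull in opposite directions and the ratio is \emph{not} improved by rearrangement. This is not a fixable technicality: the extremal $\phi(x)=|x|^{b/(p-1)}(1-|x|^2)^{1/(p-1)}$ vanishes at the origin and is therefore not radially nonincreasing, so restricting the maximization to the radial decreasing class would miss it entirely. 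Your Euler--Lagrange computation is correct, but without a legitimate reduction to a compact class you have no existence of a maximizer, and the compactness discussion you sketch (Strauss-type embedding on radial decreasing functions) is precisely the part that no longer applies.

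The paper bypasses the variational machinery altogether with an elementary argument that is a sharpened version of your fallback. Rather than the crude split $\int|u|^2=\int_{|x|<R}+\int_{|x|\geq R}$, it writes
\[
\int|u|^2=\frac{1}{R^2}\int_{|x|\leq R}(R^2-|x|^2)|u|^2\,dx+\frac{1}{R^2}\int_{|x|\leq R}|x|^2|u|^2\,dx+\int_{|x|\geq R}|u|^2\,dx,
\]
applies H\"older to the first term with the pair $|x|^{-\frac{2b}{p+1}}|u|^2$ and $|x|^{\frac{2b}{p+1}}(R^2-|x|^2)$, bounds the remaining two terms together by $R^{-2}\|xu\|_{L^2}^2$, and then optimizes over $R$. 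The point of the $(R^2-|x|^2)$ factor is that equality in H\"older forces $|x|^{-b}|u|^{p+1}=c\,|x|^{\frac{2b}{p-1}}(R^2-|x|^2)^{\frac{p+1}{p-1}}$ on $\{|x|<R\}$, and equality in the second estimate forces $u\equiv 0$ on $\{|x|\geq R\}$; together these give exactly $|u|=\beta\phi(\alpha x)$. Your fallback split gives a valid but non-sharp inequality because its two equality conditions are incompatible. The refined split is the single idea that makes the proof both elementary and sharp.
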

The proof of Proposition \ref{proplush2} follows the ideas of \cite{DR_Going}.
\begin{proof}
Let $R>0$ to be specified later. Split the mass of $u$ as follows
\begin{equation}
\int |u(x)|^2\,dx=\frac{1}{R^2}\int_{|x|\leq R}(R^2-|x|^2)|u(x)|^2\,dx+\frac{1}{R^2}\int_{|x|\leq R}|x|^2|u(x)|^2\,dx+\int_{|x|\geq R}|u(x)|^2\,dx.
\end{equation}
By Hölder inequality we have
\begin{align}\label{propdes1}
\frac{1}{R^2}\int(R^2-|x|^2)|u(x)|^2\,dx&\leq \frac{1}{R^2}\left(\int_{|x|\leq R}|x|^{\frac{2b}{p-1}}(R^2-|x|^2)^{\frac{p+1}{p-1}}\,dx\right)^{\frac{p-1}{p+1}}\left(\int |x|^{-b}|u(x)|^{p+1}\,dx\right)^{\frac{2}{p+1}}\nonumber\\
&\leq \frac{1}{R^2}\left(\int_{|x|\leq 1}R^{\frac{2b}{p-1}}|y|^{\frac{2b}{p-1}}(R^2-R^2|y|^2)^{\frac{p+1}{p-1}}R^N\,dy\right)^{\frac{p-1}{p+1}}\left(\int |x|^{-b}|u(x)|^{p+1}\,dx\right)^{\frac{2}{p+1}}\nonumber\\
&=R^{\frac{N(p-1)+2b}{p+1}}D_{p,N} \left\||\cdot|^{-\frac{b}{p+1}}u\right\|^{2}_{p+1},
\end{align}
where
$$D_{p,N}=\left(\int_{|y|\leq 1}|y|^{\frac{2b}{p-1}}(1-|y|^2)^{\frac{p+1}{p-1}}\,dy\right)^{\frac{p-1}{p+1}}.$$
Furthermore,
\begin{equation}\label{propdes2}
\frac{1}{R^2}\int_{|x|\leq R}|x|^2|u(x)|^{2}\,dx+\int_{|x|\geq R}|u(x)|^2\,dx\leq \frac{1}{R^2}\int |x|^2|u(x)|^2\,dx.
\end{equation}
Combining \eqref{propdes1} and \eqref{propdes2}, we get
\begin{equation}\label{ineq}
\forall R>0,\,\,\,\, \|u\|^{2}_{L^2}\leq D_{p,N}\left\||\cdot|^{-\frac{b}{p+1}}u\right\|_{L^{p+1}}^2R^{\frac{N(p-1)+2b}{p+1}}+\frac{1}{R^2}\|xu\|^{2}_{L^2}.
\end{equation}
Let $F:(0,+\infty)\to \Real$ given by $F(R)=AR^{\alpha}+BR^{-2}$, where $A,B>0\mbox{ and }\alpha>0$. The minimum value of $F$ is reached at $R=\left(\frac{2B}{\alpha A}\right)^{\frac{1}{\alpha+2}}$ and
$$F\left(\left(\frac{2B}{\alpha A}\right)^{\frac{1}{\alpha+2}}\right)=A\left(\frac{2B}{\alpha A}\right)^{\frac{\alpha}{\alpha+2}}+B\left(\frac{\alpha A}{2B}\right)^{\frac{2}{\alpha+2}}=\frac{2+\alpha}{\alpha}(\alpha A)^{\frac{2}{\alpha+2}}(2B)^{\frac{\alpha}{\alpha+2}}.$$
Thus, by taking
$$R=\left(\frac{p+1}{N(p-1)+2b}\frac{2\|xu\|^2_{L^2}}{D_{p,N}\left\||\cdot|^{-\frac{b}{p+1}}u\right\|_{L^{p+1}}^2}\right)^{\frac{p+1}{N(p-1)+2(p+1)+2b}}$$
 in \eqref{ineq}, we have
 $$\|u\|^{2}_{L^2}\leq C_{p,N}^2\left\||\cdot|^{-\frac{b}{p+1}}u\right\|_{L^{p+1}}^{\frac{4(p+1)}{N(p-1)+2(p+1)+2b}}\|xu\|_{L^2}^{\frac{2N(p-1)+4b}{N(p-1)+2(p+1)+2b}}$$
 where
\begin{equation}\label{cnstlush}
C_{p,N}=\left(\frac{N(p-1)+2(p+1)+2b}{2N(p-1)+4b}\right)^{\frac{1}{2}}\left(\frac{N(p-1)+2b}{p+1}D_{p,N}\right)^{\frac{(p+1)}{N(p-1)+2(p+1)+2b}}2^{\frac{N(p-1)+2b}{2N(p-1)+4(p+1)+4b}}.
\end{equation}

Note that equality in \eqref{interp} holds if and only if there exists $R > 0$ such that \eqref{ineq} is an equality. This
is equivalent to the fact that for some $R > 0$, both \eqref{propdes1} and \eqref{propdes2} are equalities. The inequality
\eqref{propdes1} is an equality if and only if, for $|x| < R$, $|x|^{-b}|u(x)|^{p+1} = c|x|^{\frac{2b}{p-1}}(R^2-|x|^2)^{\frac{p+1}{p-1}}$ for some constant $c \geq 0$, and inequality \eqref{propdes2} is an equality if and only if $u(x) = 0$ for $|x| \geq R$. This completes the proof of Proposition \ref{proplush2}.
\end{proof}
\begin{proof}[Proof of Theorem \ref{lush2}] Since the energy
is$$E[u_0]=\frac{1}{2}\|\nabla u(t)\|^{2}_{L^2}-\frac{1}{p+1}\left\||\cdot|^{-\frac{b}{p+1}}u(t)\right\|^{p+1}_{L^{p+1}},$$
from \eqref{d2var}, we obtain
\begin{align*}
V_{tt}(t)&=4(N(p-1)+2b)E[u_0]-2(N(p-1)+2b-4)\|\nabla u(t)\|^{2}_{L^2(\Real^N)}\\&=16E[u_0]-\frac{8(p-1)s_c}{p+1}\left\||\cdot|^{-\frac{b}{p+1}}u(t)\right\|^{p+1}_{L^{p+1}}.
\end{align*}
Using the sharp interpolation inequality \eqref{interp}
\begin{equation}\label{inlush}
V_{tt}(t)\leq 16E[u_0]-\frac{8(p-1)s_c}{(p+1)(C_{p,N})^{\frac{N(p-1)}{2}+(p+1)+b}}\frac{M[u_0]^{\frac{N(p-1)}{4}+\frac{(p+1)}{2}+\frac{b}{2}}}{V(t)^\frac{N(p-1)+2b}{4}},
\end{equation}
with $C_{p,N}$ from \eqref{interp}. As done in the proof of Proposition \ref{lush1}, take $v(s)$ with $s=at$ such that
$$V(t)=\mu v(\lambda t),\,\,\,\,\lambda=\sqrt{\frac{32E[u_0]}{\mu}},$$
where
$$\mu=\left(\frac{s_c(p-1)}{2(p+1)}\right)^{\frac{4}{N(p-1)+2b}}\frac{M[u_0]^{1+(p+1)\left(\frac{2}{N(p-1)+2b}\right)}}{(C_{p,N})^{2+(p+1)\left(\frac{4}{N(p-1)+2b}\right)}E[u_0]^{\frac{4}{N(p-1)+2b}}}.$$
Hence, applying in the inequality \eqref{inlush}, we have
$$v_{ss}(s)\leq \frac{1}{2}\left(1-v^{-\frac{N(p-1)+2b}{4}}(s)\right).$$
If the inequality in the above expression is replaced by an equality, then we have that the following energy is conserved$$
\mathcal{E}(s)=\frac{k}{1+k}\left((v(s))^2-v(s)-\frac{1}{kv(s)^k}\right),
$$
where as before $k=\frac{(p-1)s_c}{2}=\frac{N(p-1)+2b}{4}-1$. The maximum of the function $$f(x)=\frac{k}{1+k}\left(x+\frac{1}{kx^k}\right),$$ attained at $x=1$, is $-1$. As we did to (A), (B) and (C), we identify the three sufficient conditions for blow-up in finite time.
\item[($A^*$)] $\mathcal{E}(0)<-1$ and $v(0)<1,$
\item[($B^*$)] $\mathcal{E}(0)>-1$ and $v_s(0)<0,$
\item[($C^*$)] $\mathcal{E}(0)=-1$, $v_s(0)<0$ and $v(0)<1.$

If $v_s(0)$ satisfies the condition
$$
v_s(0)<
\left\{
\begin{array}{ll}
+f(v(0)),\,\,\,&\mbox{if }v(0)<1\\
-f(v(0)),\,\,\,&\mbox{if }v(0)\geq 1,
\end{array}
\right.
$$
then $v$ satisfies one of the conditions (A*), (B*) and (C*). Indeed, recalling the function $f$ from \eqref{function} and using the definition of $\mathcal{E}$, we obtain
\item[a)] $\mathcal{E}<-1$ if and only if $|v_s|<f(v).$
\item[b)] $\mathcal{E}\geq-1$ if and only if $|v_s|\geq f(v).$\\
Then the previous conditions can be written in the following form:

$(A^*) \Leftrightarrow v(0)<1\mbox{ and }-f(v(0))<v_s(0)<f(v(0)),$

$(B^*) \Leftrightarrow v_s(0)<-f(v(0))$

$(C^*) \Leftrightarrow v_s(0)=-f(v(0)),\,\,\,v(0)<1.$\\
Substituting back $V(t)$, we obtain
$$\frac{V_t(0)}{\lambda\mu}<g\left(\frac{V(0)}{\mu}\right),$$
where $g$ is defined in \eqref{g}. Hence,

$$\frac{V_t(0)}{4\sqrt2}\cdot \left(\frac{2(p+1)}{s_c(p-1)}(C_{p,N})^{\frac{N(p-1)+2b}{2}+(p+1)}\right)^{\frac{2}{N(p-1)+2b}}\frac{(C_{p,N})^{1+(p+1)\left(\frac{2}{N(p-1)+2b}\right)}}{E[u_0]^{\frac{s_c}{N}}M[u_0]^{\frac{1}{2}+(p+1)\left(\frac{1}{N(p-1)+2b}\right)}}<g(\theta),$$
with
$$\theta =\left(\frac{2(p+1)}{s_c(p-1)}(C_{p,N})^{\frac{N(p-1)+2b}{2}+(p+1)}\right)^{\frac{4}{N(p-1)+2b}}\frac{E[u_0]^{\frac{4}{N(p-1)+2b}}}{M[u_0]^{1+(p+1)\left(\frac{2}{N(p-1)+2b}\right)}}V(0).$$
This completes the proof of Theorem \ref{lush2}.
\end{proof}

\bibliography{biblio}
\bibliographystyle{abbrvnat}
\newcommand{\Addresses}{{
  \bigskip
  \footnotesize

  L. Campos, \textsc{Department of Mathematics, UFMG, Brazil}\par\nopagebreak
  \textit{E-mail address:} \texttt{luccasccampos@gmail.com}

  \medskip

  M. Cardoso, \textsc{Department of Mathematics, UFMG, Brazil;}
  \textsc{Department of Mathematics, UFPI, Brazil}\par\nopagebreak
  \textit{E-mail address:} \texttt{mykael@ufpi.edu.br}

}}
\setlength{\parskip}{0pt}
\Addresses
\end{document}